\newtheorem{theorem}{Theorem}[section]
\newtheorem{property}[theorem]{Property}
\newtheorem{lemma}[theorem]{Lemma}
\newtheorem{remark}[theorem]{Remark}
\newtheorem{proposition}[theorem]{Proposition}
\newenvironment{proof}{\par\textbf{Proof:}\\}{\hfill$\square$\par}
\numberwithin{equation}{section}
\newcommand{\lz}{\lambda_1(1-z_1)+\lambda_2(1-z_2)}
\newcommand{\ee}{\textrm{e}}
\newcommand{\dd}{\,\textrm{d}}
\begin{document}
\title{A Two-Queue Polling Model with Two Priority Levels in the First Queue\footnote{The research was done in the framework of the BSIK/BRICKS project, and of the European Network of Excellence Euro-FGI.}\footnote{The present paper is an adapted and extended version of \cite{boonadanboxma2queues2008}.}}
\author{M.A.A. Boon\footnote{\textsc{Eurandom} and Department of Mathematics and Computer Science, Eindhoven University of Technology, P.O. Box 513, 5600MB Eindhoven, The Netherlands}\\\href{mailto:marko@win.tue.nl}{marko@win.tue.nl} \and I.J.B.F. Adan\footnotemark[2]\\\href{mailto:iadan@win.tue.nl}{iadan@win.tue.nl} \and O.J. Boxma\footnotemark[2]\\\href{mailto:boxma@win.tue.nl}{boxma@win.tue.nl}}

\date{May, 2008}

\maketitle

\begin{abstract}
In this paper we consider a single-server cyclic polling system consisting of two queues. Between visits to successive queues, the server is delayed by a random switch-over time. Two types of customers arrive at the first queue: high and low priority customers. For this situation the following service disciplines are considered: gated, globally gated, and exhaustive. We study the cycle time distribution, the waiting times for each customer type, the joint queue length distribution at polling epochs, and the steady-state marginal queue length distributions for each customer type.

\bigskip\noindent\textbf{Keywords:} Polling, priority levels, queue lengths, waiting times
\end{abstract}

\section{Introduction}\label{intro}

A polling model is a single-server system in which the server visits $n$ queues $Q_1, \dots, Q_n$ in cyclic order. Customers that arrive at $Q_i$ are referred to as type $i$ customers.
The special feature of the model considered in the present paper is that, within a customer type, we distinguish high and low priority customers. More specifically, we study a polling system which consists of two queues, $Q_1$ and $Q_2$. The first of these queues contains customers of two priority classes, high ($H$) and low ($L$). The exhaustive, gated and globally gated service disciplines are studied.

Our motivation to study a polling model with priorities is that the performance of a polling system can be improved through the introduction of priorities. In production environments, e.g., one could give highest priority to jobs with a service requirement below a certain threshold level. This might decrease the mean waiting time of an arbitrary customer without having to purchase additional resources \citep{wierman07}.
Priority polling models also can be used to study traffic intersections where conflicting traffic flows face a green light simultaneously; e.g. traffic which takes a left turn may have to give right of way to conflicting traffic that moves straight on, even if the traffic light is green for both traffic flows.
Another application is discussed in \cite{cicin2001}, where a priority polling model is used to study scheduling of surgery procedures in medical emergency rooms.
In the computer science community the Bluetooth and 802.11 protocols are frequently modelled as polling systems, cf. \cite{ieee802.11-1,bluetooth1,ieee802.11-2,bluetooth2}.
Many scheduling policies that have been considered or implemented in these protocols involve different priority levels in order to improve Quality-of-Service (QoS) for traffic that is very sensitive to delays or loss of data, such as Voice over Wireless IP.
The 802.11e amendment defines a set of QoS enhancements for wireless LAN applications by differentiating between high priority traffic, like streaming multimedia, and low priority traffic, like web browsing and email traffic.

Although there is quite an extensive amount of literature available on polling systems, only very few papers treat priorities in polling models. Most of these papers only provide approximations or focus on pseudo-conservation laws. In \cite{wierman07} exact mean waiting time results are obtained using the Mean Value Analysis (MVA) framework for polling systems, developed in \cite{winands06}. The MVA framework can only be used to find the first moment of the waiting time distribution for each customer type, and the mean residual cycle time. The main contribution of the present paper is the derivation of Laplace Stieltjes Transforms (LSTs) of the distributions of the marginal waiting times for each customer type; in particular it turns out to be possible to obtain exact expressions for the waiting time distributions of both high and low priority customers at a queue of a polling system. Probability Generating Functions (PGFs) are derived for the joint queue length distribution at polling epochs, and for the steady-state marginal queue length distribution of the number of customers at an arbitrary epoch. 

The present paper is structured as follows: Section \ref{general} gathers known results of nonpriority polling models which are relevant for the present study. Sections \ref{gated} (gated), \ref{globallygated} (globally gated), and \ref{exhaustive} (exhaustive) give new results on the priority polling model. In each of the sections we successively discuss the joint queue length distribution at polling epochs, the cycle time distribution, the marginal queue length distributions and waiting time distributions. The mean waiting times are given at the end of each section. A numerical example is presented in Section \ref{numericalexample} to illustrate some of the improvements that can be obtained by introducing prioritisation in a polling system.

\section{Notation and description of the nonpriority polling model}\label{general}

The model that is considered in this section, is a nonpriority polling model with two queues ($Q_1$ and $Q_2$).
We consider three service disciplines: gated, globally gated, and exhaustive. The gated service discipline states that during a visit to $Q_i$, the server serves only those type $i$ customers who are present at the polling epoch. All type $i$ customers that arrive during this visit will be served in the next cycle. In this respect, a cycle is the time between two successive visit beginnings to a queue. The exhaustive service discipline states that when the server arrives at $Q_i$, all type $i$ customers are served until no type $i$ customer is present in the system. We also consider the globally gated service discipline, which means that during a cycle only those customers will be served that were present at the beginning of that cycle.

Customers of type $i$ arrive at $Q_i$ according to a Poisson process with arrival rate $\lambda_i$ $(i = 1,2)$. Service times can follow any distribution, and we assume that a customer's service time is independent of other service times and independent of the arrival processes. The LST of the distribution of the generic service time $B_i$ of type $i$ customers is denoted by $\beta_i(\cdot)$.
The fraction of time that the server is serving customers of type $i$ equals $\rho_i := \lambda_i E(B_i)$. Switches of the server from $Q_i$ to $Q_{i+1}$ (all indices modulo 2), require a switch-over time $S_i$. The LST of this switch-over time distribution is denoted by $\sigma_i(\cdot)$. The fraction of time that the server is working (i.e., not switching) is $\rho := \rho_1+\rho_2$. We assume that $\rho < 1$, which is a necessary and sufficient condition for the steady state distributions of cycle times, queue lengths and waiting times to exist.

\cite{takacs68} studied this model, but without switch-over times and only with the exhaustive service discipline. \cite{coopermurray69} analysed this polling system for any number of queues, and for both gated and exhaustive service disciplines. \cite{eisenberg72} obtained results for a polling system with switch-over times (but only exhaustive service) by relating the PGFs of the joint queue length distributions at visit beginnings, visit endings, service beginnings and service endings. \cite{resing93} was the first to point out the relation between polling systems and Multitype Branching Processes with immigration in each state.
His results can be applied to polling models in which each queue satisfies the following property:

\begin{property}\label{resingproperty}
If the server arrives at $Q_i$ to find $k_i$ customers there, then during the course of the server's visit, each of these $k_i$ customers will effectively be replaced in an i.i.d. manner by a random population having probability generating function $h_i(z_1,\dots,z_n)$, which can be any $n$-dimensional probability generating function.
\end{property}

We use this property, and the relation to Multitype Branching Processes, to find results for our polling system with two queues, two priorities in the first queue, and gated, globally gated, and exhaustive service discipline. Notice that, unlike the gated and exhaustive service disciplines, the globally gated service discipline does not satisfy Property \ref{resingproperty}. But the results obtained by Resing also hold for a more general class of polling systems, namely those which satisfy the following (weaker) property that is formulated in \cite{semphd}:

\begin{property}\label{borstproperty}
If there are $k_i$ customers present at $Q_i$ at the beginning (or the end) of a visit to $Q_{\pi(i)}$, with $\pi(i) \in \{1, \dots, n\}$, then during the course of the visit to $Q_i$, each of these $k_i$ customers will effectively be replaced in an i.i.d. manner by a random population having probability generating function $h_i(z_1,\dots,z_n)$, which can be any $n$-dimensional probability generating function.
\end{property}

Globally gated and gated are special cases of the  synchronised gated service discipline, which states that only customers in $Q_i$ will be served that were present at the moment that the server reaches the ``parent queue'' of $Q_i$: $Q_{\pi(i)}$. For gated service, $\pi(i) = i$, for globally gated service, $\pi(i) = 1$. The synchronised gated service discipline is discussed in \cite{khamisy92}, but no observation is made that this discipline is a member of the class of polling systems satisfying Property \ref{borstproperty} which means that results as obtained in \cite{resing93} can be extended to this model.

\cite{borst97} combined the results of \cite{resing93} and \cite{eisenberg72} to find a relation between the PGFs of the marginal queue length distribution for polling systems with and without switch-over times, expressed in the Fuhrmann-Cooper queue length decomposition form \citep{fuhrmanncooper85}.

\subsection{Joint queue length distribution at polling epochs}

The probability generating function $h_i(z_1,\dots,z_n)$ which is mentioned in Property \ref{resingproperty} depends on the service discipline. In a polling system with two queues and gated service we have $h_i(z_1, z_2) = \beta_i(\lz)$. For exhaustive service this PGF becomes $h_i(z_1, z_2) = \pi_i(\sum_{j\neq i} \lambda_j(1-z_j))$, where $\pi_i(\cdot)$ is the LST of a busy period (BP) distribution in an $M/G/1$ system with only type $i$ customers, so it is the root of the equation $\pi_i(\omega) = \beta_i(\omega + \lambda_i(1 - \pi_i(\omega)))$. We choose the beginning of a visit to $Q_1$ as start of a cycle. In order to find the joint queue length distribution at the beginning of a cycle, we relate the numbers of customers in each queue at the beginning of a cycle to those at the beginning of the previous cycle. Customers always enter the system during a switch-over time, or during a visit period. The first group is called \emph{immigration}, whereas a customer from the second group is called \emph{offspring} of the customer that is served at the moment of his arrival. We define the immigration PGF for each switch-over time and the offspring PGF for each visit period analogous to \cite{resing93}. The immigration PGFs are:
\begin{align*}
g^{(2)}(z_1, z_2) &= \sigma_2(\lz), \\
g^{(1)}(z_1, z_2) &= \sigma_1(\lambda_1(1-z_1)+\lambda_2(1-h_2(z_1, z_2))).
\end{align*}
$g^{(2)}(z_1, z_2)$ is the PGF of the joint distribution of type $1$ and $2$ customers that arrive during $S_2$. For $S_1$ things are slightly more complicated, since type $2$ customers arriving during $S_1$ may be served before the end of the cycle, and generate offspring. $g^{(1)}(z_1, z_2)$ is the joint PGF of the type $1$ and $2$ customers present at the end of the cycle that either arrived during $S_1$, or are offspring of type 2 customers that arrived during $S_1$. The total immigration PGF is the product of these two PGFs:
\[
g(z_1, z_2) = \prod_{i=1}^2 g^{(i)}(z_1,z_2) = g^{(1)}(z_1,z_2)g^{(2)}(z_1,z_2).
\]
We define the offspring PGFs for each visit period in a similar manner:
\begin{align*}
f^{(2)}(z_1, z_2) &= h_2(z_1, z_2),\\
f^{(1)}(z_1, z_2) &= h_1(z_1, h_2(z_1, z_2)).
\end{align*}
The term for $Q_1$ is again slightly more complicated than the term for $Q_2$, since type 2 customers arriving during a server visit to $Q_1$ may be served before the end of the cycle, and generate offspring.

\cite{resing93} shows that the following recursive expression holds for the joint queue length PGF at the beginning of a cycle (starting with a visit to $Q_1$):
\begin{equation*}
P_1(z_1, z_2) = g(z_1, z_2)P_1\left(f^{(1)}(z_1, z_2), f^{(2)}(z_1, z_2)\right).\label{P1recursive}
\end{equation*}
This expression can be used to compute moments of the joint queue length distribution. Alternatively, iteration of this expression yields the following closed form expression for $P_1(z_1, z_2)$:
\begin{equation}
P_1(z_1, z_2) = \prod_{n=0}^\infty g(f_n(z_1, z_2)),\label{p1twoqueues}
\end{equation}
where we use the following recursive definition for $f_n(z_1, z_2)$, $n=0,1,2,\dots$:
\begin{align*}
f_n(z_1, z_2) &= (f^{(1)}(f_{n-1}(z_1, z_2)), f^{(2)}(f_{n-1}(z_1, z_2))), \\
f_0(z_1, z_2) &= (z_1, z_2).
\end{align*}
\cite{resing93} proves that this infinite product converges if and only if $\rho < 1$.

We can relate the joint queue length distribution at other polling epochs to $P_1(z_1, z_2)$. We denote the PGF of the joint queue length distribution at a visit beginning to $Q_i$ by $V_{b_i}(\cdot)$, so $P_1(\cdot) = V_{b_1}(\cdot)$. The PGF of the joint queue length distribution at a visit completion to $Q_i$ is denoted by $V_{c_i}(\cdot)$. The following relations hold:
\begin{align}
V_{b_1}(z_1, z_2) &= V_{c_2}(z_1, z_2)\sigma_2(\lz) \nonumber\\
&= V_{b_2}(z_1, h_2(z_1, z_2)) \sigma_2(\lz) \nonumber\\
&= V_{b_2}(z_1, f^{(2)}(z_1, z_2)) g^{(2)}(z_1, z_2), \label{vb1twoqueues}\\
V_{b_2}(z_1, z_2) &= V_{c_1}(z_1, z_2)\sigma_1(\lz) \nonumber\\
&= V_{b_1}(h_1(z_1, z_2), z_2) \sigma_1(\lz).\label{vb2twoqueues}
\end{align}

\subsection{Cycle time}

The cycle time, starting at a visit \emph{beginning} to $Q_1$, is the sum of the visit times to $Q_1$ and $Q_2$, and the two switch-over times which are independent of the visit times. Since type 2 customers who arrive during the visit to $Q_1$ or the switch from $Q_1$ to $Q_2$ will be served during the visit to $Q_2$, it can be shown that the LST of the distribution of the cycle time $C_1$, $\gamma_1(\cdot)$, is related to $P_1(\cdot)$ as follows:
\begin{equation}
\gamma_1(\omega) = \sigma_1(\omega + \lambda_2(1-\phi_2(\omega))) \, \sigma_2(\omega) \, P_1(\phi_1(\omega + \lambda_2(1-\phi_2(\omega))), \phi_2(\omega)),\label{cycletimelst}
\end{equation}
where $\phi_i(\cdot)$ is the LST of the distribution of the time that the server spends at $Q_i$ due to the presence of one type $i$ customer there. For gated service $\phi_i(\cdot) = \beta_i(\cdot)$, for exhaustive service $\phi_i(\cdot) = \pi_i(\cdot)$. A proof of \eqref{cycletimelst} can be found in \cite{boxmafralixbruin08}.

In some cases it is convenient to choose a different starting point for a cycle, for example when analysing a polling system with exhaustive service.
If we define $C_1^*$ to be the time between two successive visit \emph{completions} to $Q_1$, the LST of its distribution, $\gamma^*_1(\cdot)$, is:
\begin{align}
\gamma^*_1(\omega) =& \sigma_1(\omega + \lambda_1(1-\phi_1(\omega)) + \lambda_2(1-\phi_2(\omega + \lambda_1(1-\phi_1(\omega)))))\nonumber\\
&\cdot \sigma_2(\omega + \lambda_1(1-\phi_1(\omega))) \, V_{c_1}(\phi_1(\omega), \phi_2(\omega+\lambda_1(1-\phi_1(\omega)))),\label{cycletimlstVc}
\end{align}
with $V_{c_1}(z_1,z_2) = P_1(h_1(z_1, z_2), z_2)$.

\subsection{Marginal queue lengths and waiting times}

We denote the PGF of the steady-state marginal queue length distribution of $Q_1$ at the visit beginning by $\widetilde{V}_{b_1}(z) = V_{b_1}(z, 1)$. Analogously we define $\widetilde{V}_{b_2}(\cdot), \widetilde{V}_{c_1}(\cdot)$, and $\widetilde{V}_{c_2}(\cdot)$.
It is shown in \cite{borst97} that the steady-state marginal queue length of $Q_i$ can be decomposed into two parts: the queue length of the corresponding $M/G/1$ queue with only type $i$ customers, and the queue length at an arbitrary epoch during the intervisit period of $Q_i$, denoted by ${N_{i|I}}$. \cite{borst97} show that by virtue of PASTA, ${N_{i|I}}$ has the same distribution as the number of type $i$ customers seen by an arbitrary type $i$ customer arriving during an intervisit period, which equals
\[
E(z^{N_{i|I}}) = \frac{E(z^{N_{i|I_{\textit{begin}}}}) - E(z^{N_{i|I_{\textit{end}}}})}{(1-z)(E(N_{i|I_{\textit{end}}}) - E(N_{i|I_{\textit{begin}}}))},
\]
where $N_{i|I_{\textit{begin}}}$ is the number of type $i$ customers at the beginning of an intervisit period $I_i$, and $N_{i|I_{\textit{end}}}$ is the number of type $i$ customers at the end of $I_i$. Since the beginning of an intervisit period coincides with the completion of a visit to $Q_i$, and the end of an intervisit period coincides with the beginning of a visit, we know the PGFs for the distributions of these random variables: $\widetilde V_{c_i}(\cdot)$ and $\widetilde V_{b_i}(\cdot)$. This leads to the following expression for the PGF of the steady-state queue length distribution of $Q_i$ at an arbitrary epoch, $E[z^{N_i}]$:
\begin{equation}
E[z^{N_i}] = \frac{(1-\rho_i)(1-z)\beta_i(\lambda_i(1-z))}{\beta_i(\lambda_i(1-z))-z}
\cdot\frac{\widetilde{V}_{c_i}(z) - \widetilde{V}_{b_i}(z)}{(1-z)(E(N_{i|I_{\textit{end}}}) - E(N_{i|I_{\textit{begin}}}))}.   \label{queuelengthdecomposition}
\end{equation}
\cite{keilsonservi90} show that the distributional form of Little's law can be used to find the LST of the marginal waiting time distribution: $E(z^{N_i}) = E(\ee^{-\lambda_i(1-z)(W_i + B_i)})$, hence $E(\ee^{-\omega W_i}) = E[(1-\frac{\omega}{\lambda_i})^{N_i}]/\beta_i(\omega)$. This can be substituted into \eqref{queuelengthdecomposition}:
\begin{align}
E[\ee^{-\omega W_i}] =& \frac{(1-\rho_i)\omega}{\omega-\lambda_i(1-\beta_i(\omega))}\cdot
\frac{\widetilde{V}_{c_i}\left(1-\frac{\omega}{\lambda_i}\right) - \widetilde{V}_{b_i}\left(1-\frac{\omega}{\lambda_i}\right)}{(E(N_{i|I_{\textit{end}}}) - E(N_{i|I_{\textit{begin}}}))\omega/\lambda_i}   \nonumber\\
=& E[\ee^{-\omega W_{i|M/G/1}}]E\left[\left(1-\frac\omega{\lambda_i}\right)^{N_{i|I}}\right].\label{waitingtimedecomposition}
\end{align}
The interpretation of this formula is that the waiting time of a type $i$ customer in a polling model is the sum of two independent random variables: the waiting time of a customer in an $M/G/1$ queue with only type $i$ customers, $W_{i|M/G/1}$, and the remaining intervisit time for a customer that arrives at an arbitrary epoch during the intervisit time of $Q_i$.

For \emph{gated} service, the number of type $i$ customers at the beginning of a visit to $Q_i$ is exactly the number of type $i$ customers that  arrived during the previous cycle, starting at $Q_i$. In terms of PGFs: $\widetilde V_{b_i}(z) = \gamma_i(\lambda_i(1-z))$. The number of type $i$ customers at the end of a visit to $Q_i$ are exactly those type $i$ customers that arrived during this visit. In terms of PGFs: $\widetilde V_{c_i}(z) = \gamma_i(\lambda_i(1-\beta_i(\lambda_i(1-z))))$. We can rewrite $E(N_{i|I_{\textit{end}}}) - E(N_{i|I_{\textit{begin}}})$ as $\lambda_i E(I_i)$, because this is the number of type $i$ customers that arrive during an intervisit time. In Section \ref{momentsgeneral} we show that $\lambda_i E(I_i) = \lambda_i(1-\rho_i)E(C)$. Using these expressions we can rewrite Equation \eqref{waitingtimedecomposition} for gated service to:
\begin{equation}
E[\ee^{-\omega W_i}] = \frac{(1-\rho_i)\omega}{\omega-\lambda_i(1-\beta_i(\omega))}\cdot
\frac{\gamma_i(\lambda_i(1-\beta_i(\omega))) - \gamma_i(\omega)}{(1-\rho_i)\omega E(C)}.\label{lstwgated}
\end{equation}
For \emph{exhaustive} service, $\widetilde V_{c_i}(z) = 1$, because $Q_i$ is empty at the end of a visit to $Q_i$. The number of type $i$ customers at the beginning of a visit to $Q_i$ in an exhaustive polling system is equal to the number of type $i$ customers that arrived during the previous intervisit time of $Q_i$. Hence, $\widetilde{V}_{b_i}(z) = \widetilde{I}_i(\lambda_i(1-z))$, where $\widetilde{I}_i(\cdot)$ is the LST of the intervisit time distribution for $Q_i$. Substitution of $\widetilde{I}_i(\omega) = \widetilde{V}_{b_i}(1-\frac{\omega}{\lambda_i})$ in \eqref{waitingtimedecomposition} leads to the following expression for the LST of the steady-state waiting time distribution of a type $i$ customer in an exhaustive polling system:
\begin{equation}
E[\ee^{-\omega W_i}] = \frac{(1-\rho_i)\omega}{\omega-\lambda_i(1-\beta_i(\omega))}\cdot
\frac{1-\widetilde{I}_i(\omega)}{\omega E(I_i)}.\label{lstwexhaustive}
\end{equation}
To the best of our knowledge, the following result is new.
\begin{proposition}
Let the cycle time $C^*_i$ be the time between two successive visit \emph{completions} to $Q_i$. The LST of the cycle time distribution is given by \eqref{cycletimlstVc}. An equivalent expression for $E[\ee^{-\omega W_i}]$ if $Q_i$ is served exhaustively, is:
\begin{align}
E[\ee^{-\omega W_i}] &= \frac{1-\gamma^*_i(\omega - \lambda_i(1-\beta_i(\omega)))}{(\omega-\lambda_i(1-\beta_i(\omega)))E(C)}\label{lstwexhaustiveC}\\
&=  E[\ee^{-(\omega - \lambda_i(1-\beta_i(\omega))) C^*_{i,\textit{res}}}],\nonumber
\end{align}
where $C^*_{i,\textit{res}}$ is the residual length of $C^*_i$.
\end{proposition}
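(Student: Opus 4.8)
The plan is to start from the exhaustive-service formula \eqref{lstwexhaustive}, which has already been established, and show that it coincides with \eqref{lstwexhaustiveC}. Comparing the two expressions, and using $E(I_i)=(1-\rho_i)E(C)$ together with the fact that the mean cycle length does not depend on the reference epoch, so that $E(C^*_i)=E(C)$, one sees that the entire proposition reduces to the single identity
\[
\widetilde I_i(\omega) = \gamma^*_i\bigl(\omega-\lambda_i(1-\beta_i(\omega))\bigr),
\]
relating the intervisit-time LST of $Q_i$ to the completion-to-completion cycle-time LST. So the real content is to prove this relation; once it is in hand, substitution into \eqref{lstwexhaustive} and cancellation of the factor $(1-\rho_i)$ immediately yield the first line of \eqref{lstwexhaustiveC}.

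To prove the identity I would argue probabilistically, exploiting the exhaustive discipline. Write $C^*_i = I_i + \Theta_i$, where $\Theta_i$ is the length of the visit to $Q_i$ that directly follows the intervisit period $I_i$. Under exhaustive service the customers present at the start of that visit are exactly the type $i$ customers who arrived during $I_i$, and the server stays until $Q_i$ is empty; hence $\Theta_i$ is, conditionally on the number $K$ of such customers, a sum of $K$ i.i.d.\ ordinary $M/G/1$ busy periods, each with LST $\pi_i(\cdot)$. Conditioning on $I_i$, the number $K$ is $\mathrm{Poisson}(\lambda_i I_i)$, so $E[\ee^{-\omega\Theta_i}\mid I_i]=\ee^{-\lambda_i I_i(1-\pi_i(\omega))}$, and therefore
\[
\gamma^*_i(\omega)=E\bigl[\ee^{-\omega I_i}\ee^{-\lambda_i I_i(1-\pi_i(\omega))}\bigr]=\widetilde I_i\bigl(\omega+\lambda_i(1-\pi_i(\omega))\bigr).
\]
(One could instead derive this by putting $\phi_i=\pi_i$ in \eqref{cycletimlstVc}, but the busy-period argument is shorter.)

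The final step is to invert the argument map $\omega\mapsto\omega+\lambda_i(1-\pi_i(\omega))$. By the busy-period functional equation $\pi_i(\omega)=\beta_i(\omega+\lambda_i(1-\pi_i(\omega)))$, setting $s:=\omega+\lambda_i(1-\pi_i(\omega))$ gives $\pi_i(\omega)=\beta_i(s)$ and hence $\omega=s-\lambda_i(1-\beta_i(s))$; this map is a strictly increasing bijection of $[0,\infty)$ onto itself, its derivative at $0$ being $1-\rho_i>0$. Substituting $\omega=s-\lambda_i(1-\beta_i(s))$ into the displayed formula for $\gamma^*_i$ yields $\gamma^*_i\bigl(s-\lambda_i(1-\beta_i(s))\bigr)=\widetilde I_i(s)$, which is exactly the identity required. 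Plugging it into \eqref{lstwexhaustive} and simplifying with $E(I_i)=(1-\rho_i)E(C)$ produces the first line of \eqref{lstwexhaustiveC}; the second line is then merely the recognition that $\tfrac{1-\gamma^*_i(\theta)}{\theta\,E(C^*_i)}$ is the LST of the stationary-excess (residual) distribution of $C^*_i$, evaluated at $\theta=\omega-\lambda_i(1-\beta_i(\omega))$ with $E(C^*_i)=E(C)$.

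I expect the only genuinely delicate point to be the bookkeeping in the second step: carefully justifying that the post-intervisit visit time is, conditionally on $I_i$, a $\mathrm{Poisson}(\lambda_i I_i)$ sum of i.i.d.\ $M/G/1$ busy periods (using that only type $i$ customers are served during the visit to $Q_i$, so arrivals of the other type do not enter $\Theta_i$), and confirming that $E(C^*_i)=E(C)$ so that the residual-time interpretation is literally correct. Everything after that is the elementary change of variable induced by the $M/G/1$ busy-period equation.
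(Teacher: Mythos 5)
Your proposal is correct and follows essentially the same route as the paper: establish $\gamma^*_i(\omega)=\widetilde I_i(\omega+\lambda_i(1-\pi_i(\omega)))$ (the paper asserts this verbally where you give the Poisson-sum-of-busy-periods computation explicitly), invert it via the busy-period functional equation with the substitution $s=\omega+\lambda_i(1-\pi_i(\omega))$ to get $\widetilde I_i(s)=\gamma^*_i(s-\lambda_i(1-\beta_i(s)))$, and substitute into \eqref{lstwexhaustive}. The only additions beyond the paper's proof are the explicit bijectivity remark and the residual-time identification, both of which are correct.
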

\begin{proof}
The cycle time is the length of an intervisit period $I_i$ plus the length of a visit $V_i$, which is the time required to serve all type $i$ customers that have arrived during $I_i$, and their type $i$ descendants. Hence, the following equation holds:
\begin{equation}
\gamma^*_i(\omega) = \widetilde{I}_i(\omega + \lambda_i(1-\pi_i(\omega))).\label{lstCexhaustiveI}
\end{equation}
We use this equation to find the inverse relation:
\begin{align*}
\widetilde{I}_i(\omega + \lambda_i(1-\pi_i(\omega))) &= \gamma^*_i(\omega) \\
&= \gamma^*_i(\omega + \lambda_i(1-\pi_i(\omega)) - \lambda_i(1-\pi_i(\omega)))\\
&= \gamma^*_i(\omega + \lambda_i(1-\pi_i(\omega)) - \lambda_i(1-\beta_i(\omega+\lambda_i(1-\pi_i(\omega))))).
\end{align*}
If we substitute $s := \omega + \lambda_i(1-\pi_i(\omega))$, we find
\begin{equation}
\widetilde{I}_i(s) = \gamma^*_i(s - \lambda_i(1-\beta_i(s))).\label{intervisitC}
\end{equation}
Substitution of \eqref{intervisitC} into \eqref{lstwexhaustive} gives \eqref{lstwexhaustiveC}.
\end{proof}
\begin{remark}
We can write \eqref{lstCexhaustiveI} and \eqref{intervisitC} as follows:
\[
\gamma^*_i(\omega) = \widetilde{I}_i(\psi(\omega)), \qquad
\widetilde{I}_i(s) = \gamma^*_i(\phi(s)),
\]
where $\phi(\cdot)$ equals the Laplace exponent of the L\'evy process $\sum_{j=1}^{N(t)}B_{i,j}-t$, with $N(t)$ a Poisson process with intensity $\lambda_i$, and with $\psi(\omega) = \omega+\lambda_i(1-\pi_i(\omega))$, which is known to be the inverse of $\phi(\cdot)$.
\end{remark}

\subsection{Moments}\label{momentsgeneral}

The focus of this paper is on LST and PGF of distribution functions, not on their moments. Moments can be obtained by differentiation, and are also discussed in \cite{wierman07}. In this subsection we will only mention some results that will be used later.

First we will derive the mean cycle time $E(C)$. Unlike higher moments of the cycle time, the mean does not depend on where the cycle starts: $E(C) = \frac{E(S_1) + E(S_2)}{1-\rho}$. This can easily be seen, because $1-\rho$ is the fraction of time that the server is not working, but switching. The total switch-over time is $E(S_1) + E(S_2)$.

The expected length of a visit to $Q_i$ is $E(V_i) = \rho_i E(C)$. The mean length of an intervisit period for $Q_i$ is $E(I_i) = (1-\rho_i)E(C)$. Notice that these expectations do not depend on the service discipline used. The expected number of type $i$ customers at polling moments does depend on the service discipline. For gated service the expected number of type $i$ customers at the beginning of a visit to $Q_i$ is $\lambda_i E(C)$. For exhaustive service this is $\lambda_i E(I_i)$. The expected number of type $i$ customers at the beginning of a visit to $Q_{i+1}$ is $\lambda_i (E(V_i) + E(S_i))$ for gated service, and $\lambda_i E(S_i)$ for exhaustive service.

Moments of the waiting time distribution for a type $i$ customer at an arbitrary epoch can be derived from the LSTs given by \eqref{lstwgated}, \eqref{lstwexhaustive} and \eqref{lstwexhaustiveC}. We only present the first moment:
\begin{align}
&\textrm{Gated: } & E(W_i) &= (1+\rho_i)\frac{E(C_i^2)}{2E(C)},\label{ewgated}\\
&\textrm{Exhaustive: } & E(W_i) &= \frac{E(I_i^2)}{2E(I_i)}+\frac{\rho_i}{1-\rho_i}\frac{E(B_i^2)}{2E(B_i)},\nonumber\\
& &  &= (1-\rho_i)\frac{E({C^*_i}^2)}{2E(C)}.\label{ewexhaustiveC}
\end{align}
Notice that the start of $C_i$ is the \emph{beginning} of a visit to $Q_i$, whereas the start of $C^*_i$ is  the \emph{end} of a visit. Equations \eqref{ewgated} and \eqref{ewexhaustiveC} are in agreement with Equations (4.1) and (4.2) in \cite{boxmaworkloadsandwaitingtimes89}. Although at first sight these might seem nice, closed formulas, it should be noted that the expected residual cycle time and the expected residual intervisit time are not easy to determine, requiring the solution of a large set of equations. MVA is an efficient technique to compute mean waiting times, the mean residual cycle time, and also the mean residual intervisit time. We refer to \cite{winands06} for an MVA framework for polling models.

\section{Gated service}\label{gated}

In this section we study the gated service discipline for a polling system with two queues and two priority classes in the first queue: high ($H$) and low ($L$) priority customers. All type $H$ and $L$ customers that are present at the moment when the server arrives at $Q_1$, will be served during the server's visit to $Q_1$. First all type $H$ customers will be served, then all type $L$ customers. Type $H$ customers arrive at $Q_1$ according to a Poisson process with intensity $\lambda_H$, and have a service requirement $B_H$ with LST $\beta_H(\cdot)$. Type $L$ customers arrive at $Q_1$ with intensity $\lambda_L$, and have a service requirement $B_L$ with LST $\beta_L(\cdot)$. If we do not distinguish between high and low priority customers, we can still use the results from Section \ref{general} if we regard the system as a polling system with two queues where customers in $Q_1$ arrive according to a Poisson process with intensity $\lambda_1 := \lambda_H + \lambda_L$ and have service requirement $B_1$ with LST $\beta_1(\cdot) = \frac{\lambda_H}{\lambda_1} \beta_H(\cdot) + \frac{\lambda_L}{\lambda_1} \beta_L(\cdot)$.

We follow the same approach as in Section \ref{general}. First we study the joint queue length distribution at polling epochs, then the cycle time distribution, followed by the marginal queue length distribution and waiting time distribution. The last subsection provides the first moment of these distributions.

\subsection{Joint queue length distribution at polling epochs}

Equations \eqref{vb1twoqueues} and \eqref{vb2twoqueues} give the PGFs of the joint queue length distribution at visit beginnings, $V_{b_i}(z_1, z_2)$. A type 1 customer entering the system is a type $H$ customer with probability $\lambda_H/\lambda_1$, and a type $L$ customer with probability $\lambda_L/\lambda_1$. We can express the PGF of the joint queue length distribution in the polling system with priorities, $V_{b_i}(\cdot,\cdot,\cdot)$, in terms of the PGF of the joint queue length distribution in the polling system without priorities, $V_{b_i}(\cdot,\cdot)$.

\begin{lemma}
\label{p1_3qs}
\begin{equation}
V_{b_i}(z_H, z_L, z_2) = V_{b_i}\left(\frac{\lambda_H z_H+\lambda_L z_L}{\lambda_1}, z_2\right).
\end{equation}
\end{lemma}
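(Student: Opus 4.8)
The plan is to reduce the priority system to the nonpriority system of Section~\ref{general} by ignoring the labels of the type-$1$ customers, and then to recover the $(H,L)$-split via an independent-thinning argument. First I would argue that if one disregards the distinction between high and low priority customers in $Q_1$, the resulting model is \emph{exactly} the two-queue nonpriority model of Section~\ref{general} with $\lambda_1=\lambda_H+\lambda_L$ and $\beta_1(\cdot)=\frac{\lambda_H}{\lambda_1}\beta_H(\cdot)+\frac{\lambda_L}{\lambda_1}\beta_L(\cdot)$: the superposition of the two independent Poisson streams is Poisson with rate $\lambda_1$, the service time of an arbitrary type-$1$ customer has LST $\beta_1$, and --- crucially for \emph{gated} service --- serving all high priority customers before all low priority ones during a visit to $Q_1$ changes neither the length of that visit (a sum of service times, hence order-independent) nor the multiset of customers present at any polling epoch. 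Consequently the joint queue length vector $(N_1,N_2)$ at a visit beginning to $Q_i$ in the priority system has PGF $V_{b_i}(z_1,z_2)$ as given in Section~\ref{general}.

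Next I would show that, conditional on $(N_1,N_2)=(n_1,n_2)$ at a visit beginning to $Q_i$, the number $N_H$ of those $n_1$ type-$1$ customers that are of high priority is $\mathrm{Binomial}(n_1,\lambda_H/\lambda_1)$ and is independent of $(N_1,N_2)$. For gated service the type-$1$ customers present at a visit beginning to $Q_1$ are precisely those that arrived during the previous cycle, and those present at a visit beginning to $Q_2$ are precisely those that arrived during the preceding visit to $Q_1$ and the ensuing switch-over; in either case none of the ``old'' $Q_1$ customers survives, since a gated visit to $Q_1$ serves exactly the type-$1$ customers present at its start. The lengths of the relevant time windows are determined by arrival epochs and service times (hence priority labels) of customers from \emph{earlier}, disjoint generations, so they are independent of the labels of the customers arriving inside these windows; since each Poisson type-$1$ arrival is, independently of everything else, high priority with probability $\lambda_H/\lambda_1$, the independent-marking property of the Poisson process yields the claimed conditional binomial split.

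Combining the two steps, I would then condition and sum:
\[
E\!\left[z_H^{N_H}z_L^{N_L}z_2^{N_2}\right]
= E\!\left[\Big(\tfrac{\lambda_H z_H+\lambda_L z_L}{\lambda_1}\Big)^{N_1} z_2^{N_2}\right]
= V_{b_i}\!\left(\tfrac{\lambda_H z_H+\lambda_L z_L}{\lambda_1},\, z_2\right),
\]
which is exactly the assertion, since the left-hand side is $V_{b_i}(z_H,z_L,z_2)$ by definition. The main obstacle is the rigour of the second step: one must make sure that the service order within a $Q_1$-visit (high before low) does not secretly correlate ``being present at a polling epoch'' with ``being of type $H$''. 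For gated service this is clean because, as noted, every type-$1$ customer counted at a visit beginning is a fresh arrival from a window whose duration was fixed by data of an earlier, disjoint set of customers; a coupling of the priority and nonpriority systems on a common probability space --- same arrival streams, same service-time sequence, labels attached but ignored in the nonpriority copy --- makes this precise and simultaneously establishes the first step. The remaining manipulations are the routine PGF computations indicated above.
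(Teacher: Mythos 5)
Your proposal is correct and follows essentially the same route as the paper: condition on the total number $X_1$ of type-$1$ customers at the visit beginning, observe that these are exactly the fresh arrivals of a time window so that the $H/L$ split is $\mathrm{Binomial}(X_1,\lambda_H/\lambda_1)$ independently of $(X_1,X_2)$, compute the conditional PGF $\left(\frac{\lambda_H z_H+\lambda_L z_L}{\lambda_1}\right)^{X_1}$, and sum. You simply spell out in more detail (via independent Poisson marking and a coupling with the nonpriority system) what the paper asserts in one sentence.
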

\begin{proof}
Let $X_H$ be the number of high priority customers present in $Q_1$ at the beginning of a visit to $Q_i$, $i=1,2$. Similarly define $X_L$ to be the number of low priority customers present in $Q_1$ at the beginning of a visit to $Q_i$. Let $X_1 = X_H + X_L$. Since the type $H$/$L$ customers in $Q_1$ are exactly those $H$/$L$ customers that arrived since the previous visit beginning at $Q_i$, we know that
\[P(X_H=i,X_L=k-i|X_1=k) = \binom ki \left(\frac{\lambda_H}{\lambda_1}\right)^i \left(\frac{\lambda_L}{\lambda_1}\right)^{k-i}.\]
Hence
\begin{align*}
E[z_H^{X_H}z_L^{X_L}|X_1 = k]
&= \sum_{i=0}^\infty\sum_{j=0}^\infty z_H^iz_L^j P(X_H=i,X_L=j|X_1=k) \\
&=\left(\frac{\lambda_H z_H + \lambda_L z_L}{\lambda_1}\right)^k.
\end{align*}
Finally,
\begin{align*}
V_{b_i}(z_H, z_L, z_2)
 &= \sum_{i=0}^\infty\sum_{j=0}^\infty \left(\frac{\lambda_H z_H + \lambda_L z_L}{\lambda_1}\right)^i z_2^j P(X_1=i, X_2=j) \\
&= V_{b_i}\left(\frac{1}{\lambda_1}(\lambda_H z_H+\lambda_L z_L), z_2\right).
\end{align*}
\hfill\mbox{}  
\end{proof}

\subsection{Cycle time}

The LST of the cycle time distribution is still given by \eqref{cycletimelst} if we define $\lambda_1 := \lambda_H + \lambda_L$ and $\beta_1(\cdot) := \frac{\lambda_H}{\lambda_1} \beta_H(\cdot) + \frac{\lambda_L}{\lambda_1} \beta_L(\cdot)$, because the cycle time does not depend on the order of service.

Equation \eqref{cycletimelst} is valid for polling systems with queues having any branching type service discipline. In the present section we can derive an alternative, shorter expression for $\gamma_1(\cdot)$ by explicitly using the fact that $Q_1$ receives gated service. The type 1 (i.e. both $H$ and $L$) customers present at the visit beginning to $Q_1$ are those that arrived during the previous cycle: $P_1(z, 1) = \gamma_1(\lambda_1(1-z))$. By setting $\omega = \lambda_1(1-z)$, this leads to the following expression for the LST of the distribution of $C_1$ if service in $Q_1$ is gated:
\begin{equation}
\gamma_1(\omega) = P_1(1-\frac{\omega}{\lambda_1}, 1).\label{lstCgatedShort}
\end{equation}

\subsection{Marginal queue lengths and waiting times}\label{gatedmarginalw}

We first determine the LST of the waiting time distribution for a type $L$ customer, using the fact that this customer will not be served until the next cycle (starting at $Q_1$). The time from the start of the cycle until the arrival will be called ``past cycle time'', denoted by $C_{1P}$. The residual cycle time will be denoted by $C_{1R}$. The waiting time of a type $L$ customer is composed of $C_{1R}$, the service times of all high priority customers that arrived during $C_{1P}+C_{1R}$, and the service times of all low priority customers that have arrived during $C_{1P}$. Let $N_H(T)$ be the number of high priority customers that have arrived during time interval $T$, and equivalently define $N_L(T)$.

\begin{theorem}
\begin{align*}
E\left[\ee^{-\omega W_L}\right] =& \frac{\gamma_1(\lambda_H(1-\beta_H(\omega))+\lambda_L(1-\beta_L(\omega)))
- \gamma_1(\omega+\lambda_H(1-\beta_H(\omega)))}{(\omega-\lambda_L(1-\beta_L(\omega)))E(C)}.
\end{align*}
\end{theorem}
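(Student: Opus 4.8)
The plan is to decompose the waiting time $W_L$ of a tagged type $L$ customer by conditioning on where in its cycle it arrives, and then to average over that cycle by a length‑biasing argument. Because $Q_1$ receives gated service and the tagged customer arrives during the cycle $C_1$ (started at a visit beginning to $Q_1$), it is not served until the next visit to $Q_1$; during that visit all high priority customers present at the visit beginning are served first, and these are exactly the type $H$ customers that arrived during the whole cycle $C_1 = C_{1P}+C_{1R}$, whereas among the low priority customers only those that arrived before the tagged customer, i.e.\ during $C_{1P}$, are served ahead of it. Hence
\[
W_L = C_{1R} + \sum_{k=1}^{N_H(C_{1P}+C_{1R})} B_{H,k} + \sum_{k=1}^{N_L(C_{1P})} B_{L,k},
\]
with the $B_{H,k}$ and $B_{L,k}$ independent copies of $B_H$ and $B_L$.

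Next I would condition on $C_{1P}=u$ and $C_{1R}=v$. Since type $H$ and type $L$ customers arrive according to independent Poisson processes, and — crucially — the length of $C_1$ does not depend on the type $1$ arrivals occurring during $C_1$ itself (those customers are only served in the following cycle), the relevant Poisson counts are unaffected by the conditioning, and the three terms above are conditionally independent given $(u,v)$. Using the compound‑Poisson transforms $\ee^{-\lambda_H(u+v)(1-\beta_H(\omega))}$ and $\ee^{-\lambda_L u(1-\beta_L(\omega))}$, this yields
\[
E\!\left[\ee^{-\omega W_L}\mid C_{1P}=u,\,C_{1R}=v\right] = \ee^{-u\,a(\omega)}\,\ee^{-v\,b(\omega)},
\]
where $a(\omega):=\lambda_H(1-\beta_H(\omega))+\lambda_L(1-\beta_L(\omega))$ and $b(\omega):=\omega+\lambda_H(1-\beta_H(\omega))$.

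Then I would average over $(C_{1P},C_{1R})$. Because the tagged customer is a Poisson($\lambda_L$) arrival, the cycle containing it is length‑biased and its arrival position is uniform within that cycle, so that $E[g(C_{1P},C_{1R})] = \frac{1}{E(C)}E\big[\int_0^{C_1} g(u,C_1-u)\dd u\big]$. Applying this with $g(u,v)=\ee^{-u a(\omega)-v b(\omega)}$ and evaluating the elementary inner integral turns the right‑hand side into
\[
\frac{1}{(b(\omega)-a(\omega))E(C)}\,E\!\left[\ee^{-a(\omega)C_1}-\ee^{-b(\omega)C_1}\right]
= \frac{\gamma_1(a(\omega))-\gamma_1(b(\omega))}{(b(\omega)-a(\omega))E(C)}.
\]
Since $b(\omega)-a(\omega)=\omega-\lambda_L(1-\beta_L(\omega))$, this is exactly the claimed expression.

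I expect the main obstacle to be the bookkeeping in the first two steps: correctly identifying which customers delay the tagged type $L$ customer (all type $H$ arrivals over the \emph{entire} cycle, but only the type $L$ arrivals during the past part $C_{1P}$), and justifying the conditional independence — in particular that $C_1$ is independent of the type $1$ arrival stream during $C_1$, so that conditioning on $(C_{1P},C_{1R})$ leaves the Poisson counts of type $H$ arrivals in $[0,C_{1P}+C_{1R}]$ and of type $L$ arrivals in $[0,C_{1P})$ intact. Once this is settled, the length‑biased‑cycle identity and the integral computation are routine.
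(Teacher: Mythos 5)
Your proposal is correct and follows essentially the same route as the paper: the identical decomposition $W_L = C_{1R}+\sum_{k=1}^{N_H(C_{1P}+C_{1R})}B_{H,k}+\sum_{k=1}^{N_L(C_{1P})}B_{L,k}$, conditioning on $(C_{1P},C_{1R})$ with compound-Poisson transforms, and the past/residual cycle-time identity $E[\ee^{-\omega_P C_{1P}-\omega_R C_{1R}}] = \big(E[\ee^{-\omega_P C_1}]-E[\ee^{-\omega_R C_1}]\big)/\big((\omega_R-\omega_P)E(C)\big)$, which the paper simply cites from Boxma, Levy and Yechiali (1992) rather than re-deriving via length-biasing. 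Your explicit justification of the conditional independence (that $C_1$ is unaffected by the type $1$ arrivals occurring within it under gated service) is a point the paper leaves implicit, but the argument is the same.
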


\noindent\begin{proof}
\begin{align}
E\left[\ee^{-\omega W_L}\right]
&= E\left[\ee^{-\omega (C_{1R}+\sum_{i=1}^{N_H(C_{1P}+C_{1R})}B_{H,i}+\sum_{i=1}^{N_L(C_{1P})}B_{L,i})}\right] \nonumber\\
&= \int_{t=0}^\infty \int_{u=0}^\infty \sum_{m=0}^\infty\sum_{n=0}^\infty E\left[\ee^{-\omega\sum_{i=1}^{m}B_{H,i}}\right]E\left[\ee^{-\omega\sum_{i=1}^{n}B_{L,i}}\right]
\nonumber\\
&\quad\cdot\ee^{-\omega u} \frac{(\lambda_H(t+u))^m}{m!}\ee^{-\lambda_H(t+u)}\frac{(\lambda_L t)^n}{n!}\ee^{-\lambda_Lt} \dd P(C_{1P}<t, C_{1R}<u)\nonumber\\
\nonumber\\
&= \int_{t=0}^\infty \int_{u=0}^\infty \ee^{-t(\lambda_H(1-\beta_H(\omega))+\lambda_L(1-\beta_L(\omega)))}\ee^{-u(\omega + \lambda_H(1-\beta_H(\omega)))} \dd P(C_{1P}<t, C_{1R}<u)\nonumber\\
&=\frac{\gamma_1(\lambda_H(1-\beta_H(\omega))+\lambda_L(1-\beta_L(\omega))) - \gamma_1(\omega+\lambda_H(1-\beta_H(\omega)))}{(\omega-\lambda_L(1-\beta_L(\omega)))E(C)}.
\label{lstwlgated}
\end{align}
For the last step in the derivation of \eqref{lstwlgated} we used
\[E[\ee^{-\omega_P C_{1P}-\omega_R C_{1R}}] = \frac{E[\ee^{-\omega_P C_1}]-E[\ee^{-\omega_R C_1}]}{(\omega_R-\omega_P)E(C)},\]
which is obtained in \cite{boxmalevyyechiali92}.
\end{proof}

\begin{remark}
The Fuhrmann-Cooper decomposition \citep{fuhrmanncooper85} still holds for the waiting time of type $L$ customers, because \eqref{lstwlgated} can be rewritten into
\begin{align}
E\left[\ee^{-\omega W_L}\right] =& \frac{(1-\rho_L)\omega}{\omega-\lambda_L(1-\beta_L(\omega))} \nonumber\\
&\cdot
\frac{\gamma_1(\lambda_H(1-\beta_H(\omega))+\lambda_L(1-\beta_L(\omega))) - \gamma_1(\omega+\lambda_H(1-\beta_H(\omega)))}{(1-\rho_L)\omega E(C)}.\label{lstwlgateddecomposition}
\end{align}
We recognise the first term on the right-hand side of \eqref{lstwlgateddecomposition} as the LST of the waiting time distribution of an $M/G/1$ queue with only type $L$ customers. An interpretation of the other two terms on the right-hand side can be found when regarding the polling system as a polling system with \emph{three} queues $(Q_H, Q_L, Q_2)$ and no switch-over time between $Q_H$ and $Q_L$. The service discipline of this equivalent system is synchronised gated, which is a more general version of gated. The gates for queues $Q_H$ and $Q_L$ are set simultaneously when the server arrives at $Q_H$, but the gate for $Q_2$ is still set when the server arrives at $Q_2$. In the following paragraphs we show that the second and third term on the right-hand side of \eqref{lstwlgateddecomposition} together can be interpreted as $E[\left(1-\frac\omega{\lambda_L}\right)^{N_{L|I}}]$, where $N_{L|I}$ is the number of type $L$ customers at a random epoch during the intervisit period of $Q_L$.
\end{remark}

The expression for the LST of the distribution of the number of type $L$ customers at an arbitrary epoch is determined by first converting the waiting time LST to sojourn time LST, i.e., multiplying expression \eqref{lstwlgateddecomposition} with $\beta_L(\omega)$. Second, we apply the distributional form of Little's law \citep{keilsonservi90} to \eqref{lstwlgateddecomposition}. This law can be applied because the required conditions are fulfilled for each customer class (H, L, and 2): the customers enter the system in a Poisson stream, every customer enters the system and leaves the system one at a time in order of arrival, and for any time $t$ the entry process into the system of customers after time $t$ and the time spent in the system by any customer arriving before time $t$ are independent. The result is:
\begin{equation}
E\left[z^{N_L}\right] = \frac{(1-\rho_L)(1-z)\beta_L(\lambda_L(1-z))}{\beta_L(\lambda_L(1-z))-z}
\cdot \frac{\widetilde{V}_{c_L}(z) - \widetilde{V}_{b_L}(z)}{(1-z)(E(N_{L|I_{\textit{end}}}) - E(N_{L|I_{\textit{begin}}}))}.\label{gfzlgateddecomposition}
\end{equation}
In this equation $\widetilde{V}_{b_L}(z)$ denotes the PGF of the distribution of the number of type $L$ customers at the beginning of a visit to $Q_L$, and $\widetilde{V}_{c_L}(z)$ denotes the PGF at the completion of a visit to $Q_L$:
\begin{align*}
\widetilde{V}_{b_L}(z) &= V_{b_1}(\beta_H(\lambda_L(1-z)), z, 1)\\
&= \gamma_1(\lambda_H(1-\beta_H(\lambda_L(1-z))) + \lambda_L(1-z)), \\
\widetilde{V}_{c_L}(z) &= V_{b_1}(\beta_H(\lambda_L(1-z)), \beta_L(\lambda_L(1-z)), 1)\\
&= \gamma_1(\lambda_H(1-\beta_H(\lambda_L(1-z))) + \lambda_L(1-\beta_L(\lambda_L(1-z)))).
\end{align*}
The last term in \eqref{gfzlgateddecomposition} is the PGF of the distribution of the number of type $L$ customers at an arbitrary epoch during the intervisit period of $Q_L$, $E[z^{N_{L|I}}]$. Substitution of $\omega := \lambda_L(1-z)$ in \eqref{gfzlgateddecomposition}, and using $(E(N_{L|I_{\textit{end}}}) - E(N_{L|I_{\textit{begin}}})) = \lambda_L E(I_L)$, shows that the second and third term at the right-hand side of \eqref{lstwlgateddecomposition} together indeed equal $E[\left(1-\frac\omega{\lambda_L}\right)^{N_{L|I}}]$.

The derivation of the LSTs of $W_H$ and $W_2$ is similar and leads to the following expressions:
\begin{align}
E\left[\ee^{-\omega W_H}\right] =& \frac{(1-\rho_H)\omega}{\omega-\lambda_H(1-\beta_H(\omega))} \cdot
\frac{\gamma_1(\lambda_H(1-\beta_H(\omega))) - \gamma_1(\omega)}{(1-\rho_H)\omega E(C)},\label{lstwhgated}\\
E\left[\ee^{-\omega W_2}\right] =& \frac{(1-\rho_2)\omega}{\omega-\lambda_2(1-\beta_2(\omega))} \cdot
\frac{\gamma_2(\lambda_2(1-\beta_2(\omega))) - \gamma_2(\omega)}{(1-\rho_2)\omega E(C)}.\label{lstw2gated}
\end{align}
\begin{remark}
Equations \eqref{lstwhgated} and \eqref{lstw2gated} are equivalent to the LST of $W_i$ in a nonpriority polling system \eqref{lstwgated}, which illustrates that the Fuhrmann-Cooper decomposition also holds for the waiting time distributions of high priority customers in $Q_1$ and type 2 customers in a polling system with gated service.
\end{remark}

Application of the distributional form of Little's law to these expressions results in:
\begin{align*}
E\left[z^{N_H}\right] &= \frac{(1-\rho_H)(1-z)\beta_H(\lambda_H(1-z))}{\beta_H(\lambda_H(1-z))-z} \cdot
\frac{\gamma_1(\lambda_H(1-\beta_H(\lambda_H(1-z)))) - \gamma_1(\lambda_H(1-z))}{\lambda_H(1-\rho_H)(1-z)E(C)},\\
E\left[z^{N_2}\right] &= \frac{(1-\rho_2)(1-z)\beta_2(\lambda_2(1-z))}{\beta_2(\lambda_2(1-z))-z} \cdot
\frac{\gamma_2(\lambda_2(1-\beta_2(\lambda_2(1-z)))) - \gamma_2(\lambda_2(1-z))}{\lambda_2(1-\rho_2)(1-z)E(C)}.
\end{align*}
\begin{remark}
If the service discipline in $Q_2$ is not gated, but another branching type service discipline that satisfies Property \ref{resingproperty}, \eqref{lstw2gated} should be replaced by the more general expression \eqref{waitingtimedecomposition}.
\end{remark}

\subsection{Moments}\label{momentsgated}

As mentioned in Section \ref{momentsgeneral}, we do not focus on moments in this paper, and we only mention the mean waiting times of type $H$ and $L$ customers. For a type $H$ customer, it is immediately clear that $E(W_H) = (1+\rho_H)E(C_{1,\textit{res}})$. The mean waiting time for a type $L$ customer can be obtained by differentiating \eqref{lstwlgated}. This results in:
\[E(W_L) = (1+2\rho_H+\rho_L)E(C_{1,\textit{res}}).\]
These formulas can also be obtained using MVA, as shown in \cite{wierman07}.

\section{Globally gated service}\label{globallygated}

In this section we discuss a polling model with two queues $(Q_1, Q_2)$ and two priority classes ($H$ and $L$) in $Q_1$ with globally gated service. For this service discipline, only customers that were present when the server started its visit to $Q_1$ are served. This feature makes the model exactly the same as a nonpriority polling model with three queues $(Q_H, Q_L, Q_2)$. Although this system does not satisfy Property \ref{resingproperty}, it does satisfy Property \ref{borstproperty} which implies that we can still follow the same approach as in the previous sections.

\subsection{Joint queue length distribution at polling epochs}

We define the beginning of a visit to $Q_1$ as the start of a cycle, since this is the moment that determines which customers will be served during the next visits to the queues. Arriving customers will always be served in the next cycle, so the three $(i=H,L,2)$ offspring PGFs are:
\begin{align*}
f^{(i)}(z_H, z_L, z_2) &= h_i(z_H, z_L, z_2) \\
&= \beta_i(\lambda_H(1-z_H)+\lambda_L(1-z_L)+\lambda_2(1-z_2)),
\end{align*}
The two $(i=1,2)$ immigration functions are:
\[
g^{(i)}(z_H, z_L, z_2) = \sigma_i(\lambda_H(1-z_H)+\lambda_L(1-z_L)+\lambda_2(1-z_2)),
\]
Using these definitions, the formula for the PGF of the joint queue length distribution at the beginning of a cycle is similar to the one found in Section \ref{general}:
\begin{equation}
P_1(z_H, z_L, z_2) = \prod_{n=0}^\infty g(f_n(z_H, z_L, z_2)).
\end{equation}

Notice that in a system with globally gated service it is possible to express the joint queue length distribution at the beginning of a cycle in terms of the cycle time LST, since all customers that are present at the beginning of a cycle are exactly all of the customers that have arrived during the previous cycle:

\begin{equation}
P_1(z_H, z_L, z_2) = \gamma_1(\lambda_H(1-z_H)+\lambda_L(1-z_L)+\lambda_2(1-z_2)).\label{p1globallygated}
\end{equation}

\subsection{Cycle time}

Since only those customers that are present at the start of a cycle, starting at $Q_1$, will be served during this cycle, the LST of the cycle time distribution is
\begin{equation}
\gamma_1(\omega) = \sigma_1(\omega)\sigma_2(\omega)P_1(\beta_H(\omega), \beta_L(\omega), \beta_2(\omega)).\label{lstcgloballygated}
\end{equation}
Substitution of \eqref{p1globallygated} into this expression gives us the following relation:
\begin{align*}
&\gamma_1(\omega) = \sigma_1(\omega)\sigma_2(\omega)\\
&\cdot\gamma_1(\lambda_H(1-\beta_H(\omega))+\lambda_L(1-\beta_L(\omega))+\lambda_2(1-\beta_2(\omega))).
\end{align*}
\cite{boxmalevyyechiali92} show that this relation leads to the following expression for the cycle time LST:
\[
\gamma_1(\omega) = \prod_{i=0}^\infty \sigma(\delta^{(i)}(\omega)),
\]
where $\sigma(\cdot) = \sigma_1(\cdot)\sigma_2(\cdot)$, and $\delta^{(i)}(\omega)$ is recursively defined as follows:
\begin{align*}
\delta^{(0)}(\omega) &= \omega,\\
\delta^{(i)}(\omega) &= \delta(\delta^{(i-1)}(\omega)), \qquad\qquad i=1,2,3,\dots,\\
\delta(\omega) &= \lambda_H(1-\beta_H(\omega)) + \lambda_L(1-\beta_L(\omega)) + \lambda_2(1-\beta_2(\omega)).
\end{align*}

\subsection{Marginal queue lengths and waiting times}

For type $H$ and $L$ customers, the expressions for $E(\ee^{-\omega W_H})$ and $E(\ee^{-\omega W_L})$ are exactly the same as the ones found in Section \ref{gatedmarginalw}, but with $\gamma_1(\cdot)$ as defined in \eqref{lstcgloballygated}.

The expression for $E(\ee^{-\omega W_2})$ can be obtained with the method used in Section \ref{gatedmarginalw}:
\begin{align*}
E\left[\ee^{-\omega W_2}\right] 
=&\sigma_1(\omega)\cdot\frac{\gamma_1(\sum_{i=H,L,2}\lambda_i(1-\beta_i(\omega))) - \gamma_1(\omega+\sum_{i=H,L}\lambda_i(1-\beta_i(\omega)))}{(\omega-\lambda_2(1-\beta_2(\omega)))E(C)}\\
=& \sigma_1(\omega)\cdot\frac{(1-\rho_2)\omega}{\omega-\lambda_2(1-\beta_2(\omega))} \\
&\cdot
\frac{\gamma_1(\sum_{i=H,L,2}\lambda_i(1-\beta_i(\omega))) - \gamma_1(\omega+\sum_{i=H,L}\lambda_i(1-\beta_i(\omega)))}{(1-\rho_2)\omega E(C)}.
\end{align*}

We can use the distributional form of Little's law to determine the LST of the marginal queue length distribution of $Q_2$:
\begin{align*}
E\left[z^{N_2}\right] =& \sigma_1(\lambda_2(1-z))\frac{(1-\rho_2)(1-z)\beta_2(\lambda_2(1-z))}{\beta_2(\lambda_2(1-z))-z} \\
&\cdot
\frac{\gamma_1\left(\sum_{i=H,L,2}\lambda_i(1-\beta_i(\lambda_2(1-z)))\right)
- \gamma_1\left(\lambda_2(1-z)+\sum_{i=H,L}\lambda_i(1-\beta_i(\lambda_2(1-z)))\right)}{
\lambda_2(1-\rho_2)(1-z)E(C)}.
\end{align*}

\begin{remark}
The Fuhrmann-Cooper queue length decomposition also holds for all customer classes in a polling system with globally gated service.
\end{remark}

\subsection{Moments}

The expressions for $E(W_H)$ and $E(W_L)$ from Section \ref{momentsgated} also hold in a globally gated polling system, but with a different mean residual cycle time. We only provide the mean waiting time of type 2 customers:
\[E(W_2) = E(S_1) + (1 + 2\rho_H + 2\rho_L + \rho_2) E(C_{1,\textit{res}}).\]

\section{Exhaustive service}\label{exhaustive}

In this section we study the same polling model as in the previous two sections, but the two queues are served exhaustively. The section has the same structure as the other sections, so we start with the derivation of the LST of the joint queue length distribution at polling epochs, followed by the LST of the cycle time distribution. LSTs of the marginal queue length distributions and waiting time distributions are provided in the next subsection. In the last part of the section the mean waiting time of each customer type is studied.

It should be noted that, although we assume that both $Q_1$ and $Q_2$ are served exhaustively, a model in which $Q_2$ is served according to another branching type service discipline, requires only minor adaptations.

\subsection{Joint queue length distribution at polling epochs}

We can derive the joint queue length distribution at the beginning of a cycle for a polling system with two queues and two priority classes in $Q_1$, $P_1(z_H, z_L, z_2)$, directly from  \eqref{p1twoqueues} for $P_1(z_1, z_2)$. Similar to the proof of Lemma \ref{p1_3qs}, we can prove that \[P_1(z_H, z_L, z_2) = P_1\left(\frac{1}{\lambda_1}(\lambda_H z_H+\lambda_L z_L), z_2\right).\]
The same holds for $V_{b_2}(\cdot, \cdot, \cdot)$ and visit completion epochs $V_{c_i}(\cdot, \cdot, \cdot)$, for $i = 1, 2$.

\subsection{Cycle time}

For the cycle time starting with a visit to $Q_1$, \eqref{cycletimelst} is still valid.
However, when studying the waiting time of a specific customer type in an exhaustively served queue, it is convenient to consider the \emph{completion} of a visit to $Q_1$ as the start of a cycle. Hence, in this section the notation $C^*_1$, or the LST of its distribution, $\gamma^*_1(\cdot)$, refers to the cycle time starting at the completion of a visit to $Q_1$. Equation \eqref{cycletimlstVc} gives the LST of the distribution of $C^*_1$.

Using the fact that customers in $Q_1$ are served exhaustively, we can find an alternative, compact expression for $\gamma_1^*(\cdot)$. The type 1 (i.e. both type $H$ and $L$ customers) customers at the beginning of a visit to $Q_1$ are exactly those type 1 customers that have arrived during the previous intervisit time: $P_1(z, 1) = \widetilde{I}_1(\lambda_1(1-z))$. Hence, by setting $\omega=\lambda_1(1-z)$, we get $\widetilde{I}_1(\omega) = P_1(1-\frac{\omega}{\lambda_1}, 1)$, and thus by \eqref{lstCexhaustiveI},
\begin{equation}
\gamma_1^*(\omega) = 
P_1(\pi_1(\omega)-\frac{\omega}{\lambda_1}, 1).\label{lstCexhaustiveShort}
\end{equation}

\subsection{Marginal queue lengths and waiting times}


Analysis of the model with exhaustive service requires a different approach. The key observation, made by \cite{fuhrmanncooper85}, is that a nonpriority polling system from the viewpoint of a type $i$ customer is an $M/G/1$ queue with multiple server vacations. This implies that the Fuhrmann-Cooper decomposition can be used, even though the intervisit times are strongly dependent on the visit times. The $M/G/1$ queue with priorities and vacations can be analysed by modelling the system as a special version of the \emph{nonpriority} $M/G/1$ queue with multiple server vacations, and then applying the results from Fuhrmann and Cooper. This approach has been used by \cite{kellayechiali88} who used the concept of \emph{delay cycles}, and also by \cite{Shanthikumar89} who used \emph{level crossing analysis}; see also \cite{takagi91}. We apply Kella and Yechiali's approach to the polling model under consideration to find the waiting time LST for type $H$ and $L$ customers. In \cite{kellayechiali88} systems with single and multiple vacations, preemptive resume and nonpreemptive service are considered. In the present paper we do not consider preemptive resume, so we only use results from the case labelled as NPMV (nonpreemptive, multiple vacations) in \cite{kellayechiali88}. We consider the system from the viewpoint of a type $H$ and type $L$ customer separately to derive $E[\ee^{-\omega W_H}]$ and $E[\ee^{-\omega W_L}]$.

From the viewpoint of a type $H$ customer and as far as waiting times are concerned, a polling system is a \emph{nonpriority} single server system with multiple vacations. The vacation can either be the intervisit period $I_1$, or the service of a type $L$ customer. The LSTs of these two types of vacations are:
\begin{align}
E[\ee^{-\omega I_1}] &= P_1(1-\omega/\lambda_1, 1), \label{intervisitexhaustive}\\
E[\ee^{-\omega B_L}] &= \beta_L(\omega). \nonumber
\end{align}
Equation \eqref{intervisitexhaustive} follows immediately from the fact that the number of type 1 (i.e. both H and L) customers at the beginning of a visit to $Q_1$ is the number of type 1 customers that have arrived during the previous intervisit period: $P_1(z, 1) = E[\ee^{-(\lambda_1(1-z)) I_1}]$.

We now use the concept of delay cycles, introduced in \cite{kellayechiali88}, to find the waiting time LST of a type $H$ customer. The key observation is that an arrival of a tagged type $H$ customer will always take place within either an $I_H$ cycle, or an $L_H$ cycle. An $I_H$ cycle is a cycle that starts with an intervisit period for $Q_1$, followed by the service of all type $H$ customers that have arrived during the intervisit period, and ends at the moment that no type $H$ customers are left in the system. Notice that at the start of the intervisit period, no type $H$ customers were present in the system either. An $L_H$ cycle is a similar cycle, but starts with the service of a type $L$ customer. This cycle also ends at the moment that no type $H$ customers are left in the system.

The fraction of time that the system is in an $L_H$ cycle is $\frac{\rho_L}{1-\rho_H}$, because type $L$ customers arrive with intensity $\lambda_L$. Each of these customers will start an $L_H$ cycle and the length of an $L_H$ cycle equals $\frac{E(B_L)}{1-\rho_H}$:
\begin{align*}
E(L_H\textrm{ cycle}) &= E(B_L) + \lambda_H E(B_L) E(\textit{BP}_H) \\
&= E(B_L) + \lambda_H E(B_L) \frac{E(B_H)}{1-\rho_H} \\
&= (1+\frac{\rho_H}{1-\rho_H})E(B_L) = \frac{E(B_L)}{1-\rho_H},
\end{align*}
where $E(\textit{BP}_H)$ is the mean length of a busy period of type $H$ customers.

The fraction of time that the system is in an $I_H$ cycle, is $1-\frac{\rho_L}{1-\rho_H} = \frac{1-\rho_1}{1-\rho_H}$. This result can also be obtained by using the argument that the fraction of time that the system is in an intervisit period is the fraction of time that the server is not serving $Q_1$, which is equal to $1-\rho_1$. A cycle which starts with such an intervisit period and stops when all type $H$ customers that arrived during the intervisit period and their type $H$ descendants have been served, has mean length $E(I_1) + \lambda_H E(I_1) E(\textit{BP}_H) = \frac{E(I_1)}{1-\rho_H}$. This also leads to the conclusion that  $\frac{1-\rho_1}{1-\rho_H}$ is the fraction of time that the system is in an $I_H$ cycle. A customer arriving during an $I_H$ cycle views the system as a nonpriority $M/G/1$ queue with multiple server vacations $I_1$; a customer arriving during an $L_H$ cycle views the system as a nonpriority $M/G/1$ queue with multiple server vacations $B_L$.

\cite{fuhrmanncooper85} showed that the waiting time of a customer in an $M/G/1$ queue with server vacations is the sum of two independent quantities: the waiting time of a customer in a corresponding $M/G/1$ queue without vacations, and the residual vacation time. Hence, the LST of the waiting time distribution of a type $H$ customer is:
\begin{equation}
E[\ee^{-\omega W_H}] = \frac{(1-\rho_H)\omega}{\omega-\lambda_H(1-\beta_H(\omega))}\cdot
\left[\frac{1-\rho_1}{1-\rho_H} \cdot \frac{1-\widetilde{I}_1(\omega)}{\omega E(I_1)}+ \frac{\rho_L}{1-\rho_H}\cdot\frac{1-\beta_L(\omega)}{\omega E(B_L)}\right].\label{lstwhexhaustive}
\end{equation}
Equation \eqref{lstwhexhaustive} is in accordance with the more general equation in Section 4.1 in \cite{kellayechiali88}.

\begin{remark}
The LST of the distribution of the waiting time of a high priority customer in a two priority $M/G/1$ queue without vacations is
\begin{equation}
E[\ee^{-\omega W_{H|M/G/1}}] =\frac{(1-\rho_1)\omega+\lambda_L(1-\beta_L(\omega))}{\omega-\lambda_H(1-\beta_H(\omega))}\label{lstwhmg1},\\
\end{equation}
see, e.g., Equation (3.85) in \cite{cohen82}, Chapter III.3. Equation \eqref{lstwhmg1} can be rewritten to \eqref{lstwhexhaustive}, with $\frac{1-\widetilde{I}_1(\omega)}{\omega E(I_1)}$ replaced by 1.
Hence, the waiting time distribution of a high priority customer in a two priority $M/G/1$ queue equals the waiting time distribution of a customer in a nonpriority $M/G/1$ queue with only type $H$ customers, where the server goes on a vacation $B_L$ with probability $\frac{\rho_L}{1-\rho_H}$.
\end{remark}
\begin{remark}
Substitution of \eqref{intervisitC} in \eqref{lstwhexhaustive} expresses $E[\ee^{-\omega W_H}]$ in terms of the LST of the cycle time distribution starting at a visit \emph{completion} to $Q_1$, $\gamma_1^*(\cdot)$:
\begin{equation}
E[\ee^{-\omega W_H}] = \frac{1-\gamma_1^*(\omega - \lambda_H(1-\beta_H(\omega)) - \lambda_L(1-\beta_L(\omega)))
+\lambda_L (1-\beta_L(\omega))E(C)}{(\omega-\lambda_H(1-\beta_H(\omega)))E(C)}.\label{lstwhexhaustiveC}
\end{equation}
\end{remark}
The concept of cycles is not really needed to model the system from the perspective of a type $L$ customer, because for a type $L$ customer the system merely consists of $I_{HL}$ cycles. An $I_{HL}$ cycle is the same as an $I_H$ cycle, discussed in the previous paragraphs, except that it ends when no type $H$ \emph{or $L$} customers are left in the system. So the system can be modelled as a nonpriority $M/G/1$ queue with server vacations. The vacation is the intervisit time $I_1$, plus the service times of all type $H$ customers that have arrived during that intervisit time and their type $H$ descendants. We will denote this extended intervisit time by $I_1^*$ with LST
\[\widetilde{I}_1^*(\omega) = \widetilde{I}_1(\omega+\lambda_H(1-\pi_H(\omega))).\]
The mean length of $I_1^*$ equals $E(I_1^*) = \frac{E(I_1)}{1-\rho_H}$.

We also have to take into account that a busy period of type $L$ customers might be interrupted by the arrival of type $H$ customers. Therefore the alternative system that we are considering will not contain regular type $L$ customers, but customers still arriving with arrival rate $\lambda_L$, whose service time equals the service time of a type $L$ customer in the original model, plus the service times of all type $H$ customers that arrive during this service time, and all of their type $H$ descendants. The LST of the distribution of this extended service time $B_L^*$ is
\[\beta_L^*(\omega) = \beta_L(\omega + \lambda_H(1-\pi_H(\omega))).\]
This extended service time is often called \emph{completion time} in the literature. In this alternative system, the mean service time of these customers equals $E(B_L^*) = \frac{E(B_L)}{1-\rho_H}$. The fraction of time that the system is serving these customers is $\rho_L^* = \frac{\rho_L}{1-\rho_H} = 1 - \frac{1-\rho_1}{1-\rho_H}$.

Now we use the results from the $M/G/1$ queue with server vacations
(starting with the Fuhrmann-Cooper decomposition) to determine the LST of the waiting time distribution for type $L$ customers:
\begin{align}
E[\ee^{-\omega W_L}] =& \frac{(1-\rho_L^*)\omega}{\omega-\lambda_L(1-\beta_L^*(\omega))} \cdot
\frac{1-\widetilde{I}_1^*(\omega)}{\omega E(I_1^*)} \nonumber\\
=&
\frac{(1-\rho_1)(\omega+\lambda_H(1-\pi_H(\omega)))}{\omega-\lambda_L(1-\beta_L(\omega+\lambda_H(1-\pi_H(\omega))))} \cdot
\frac{1-\widetilde{I}_1(\omega+\lambda_H(1-\pi_H(\omega)))}{(\omega + \lambda_H(1-\pi_H(\omega)))E(I_1)}.
\label{lstwlexhaustive}
\end{align}
The last term of \eqref{lstwlexhaustive} is the LST of the distribution of the residual intervisit time, plus the time that it takes to serve all type $H$ customers and their type $H$ descendants that arrive during this residual intervisit time. The first term of \eqref{lstwlexhaustive} is the LST of the waiting time distribution of a low-priority customer in an $M/G/1$ queue with two priorities, without vacations (see e.g. (3.76) in \cite{cohen82}, Chapter III.3).
\begin{remark}
The $M/G/1$ queue with two priorities can be viewed as a nonpriority $M/G/1$ queue with vacations, if we consider the waiting time of type $L$ customers. We only need to rewrite the first term of \eqref{lstwlexhaustive}:
\begin{align*}
E[\ee^{-\omega W_{L|M/G/1}}] =& \frac{(1-\rho_1)(\omega+\lambda_H(1-\pi_H(\omega)))}{\omega-\lambda_L(1-\beta_L(\omega+\lambda_H(1-\pi_H(\omega))))}\\
=&\frac{(1-\rho_L^*)\omega}{\omega-\lambda_L(1-\beta_L^*(\omega))} \cdot \frac{1-\rho_1}{1-\rho_L^*}
\cdot \frac{\omega+\lambda_H(1-\pi_H(\omega))}{\omega} \\
=& E[e^{-\omega W_{L|M/G/1}^*}]
\cdot\left[(1-\rho_H) + \rho_H \frac{1-\pi_H(\omega)}{\omega E(\textit{BP}_H)}\right],
\end{align*}
where $E[e^{-\omega W_{L|M/G/1}^*}]$ is the LST of the waiting time distribution of a customer in an $M/G/1$ queue where customers arrive at intensity $\lambda_L$ and have service requirement LST $\beta_L(\omega + \lambda_H(1-\pi_H(\omega)))$. So with probability $1-\rho_H$ the waiting time of a customer is the waiting time in an $M/G/1$ queue with no vacations, and with probability $\rho_H$ the waiting time of a customer is the sum of the waiting time in an $M/G/1$ queue and the residual length of a vacation, which is a busy period of type $H$ customers.
\end{remark}
\begin{remark}
Substitution of \eqref{intervisitC} in \eqref{lstwlexhaustive} leads to a different expression for $E[\ee^{-\omega W_L}]$:
\begin{align}
E[\ee^{-\omega W_L}] 
&=\frac{1-\gamma^*_1(\omega - \lambda_L(1-\beta_L(\omega+\lambda_H(1-\pi_H(\omega)))))}{(\omega-\lambda_L(1-\beta_L(\omega+\lambda_H(1-\pi_H(\omega)))))E(C)}\nonumber\\
&=E[\ee^{-(\omega - \lambda_L(1-\beta_L(\omega+\lambda_H(1-\pi_H(\omega)))))C^*_{1,\textit{res}}}].\label{lstwlexhaustiveC}
\end{align}
\end{remark}
The waiting time of type 2 customers is not affected at all by the fact that $Q_1$ contains multiple classes of customers, so \eqref{lstwexhaustive} is still valid for $E(\ee^{-\omega W_2})$.

We will refrain from mentioning the PGFs of the marginal queue length distributions here, because they can be obtained by applying the distributional form of Little's law as we have done before.

\subsection{Moments}

The mean waiting times for high and low priority customers can be found by differentiation of \eqref{lstwhexhaustive} and \eqref{lstwlexhaustive}:
\begin{align*}
E(W_H) &= \frac{\rho_H E(B_{H,\textit{res}})+\rho_L E(B_{L,\textit{res}})}{1-\rho_H}+\frac{1-\rho_1}{1-\rho_H}E(I_{1,\textit{res}}),\\
E(W_L) &= \frac{\rho_H E(B_{H,\textit{res}}) + \rho_L E(B_{L,\textit{res}})}{(1-\rho_H)(1-\rho_1)} + \frac{1}{1-\rho_H}E(I_{1,\textit{res}}).
\end{align*}
Differentiation of \eqref{lstwhexhaustiveC} and \eqref{lstwlexhaustiveC} leads to alternative expressions, that can also be found in \cite{wierman07}.
\begin{align*}
E(W_H) &= \frac{(1-\rho_1)^2}{1-\rho_H}\frac{E({C^*_1}^2)}{2E(C)},\\
E(W_L) &= \frac{(1-\rho_1)^2}{(1-\rho_H)(1-\rho_1)}\frac{E({C^*_1}^2)}{2E(C)}\\
&= \left(1-\frac{\rho_L}{1-\rho_H}\right)\frac{E({C^*_1}^2)}{2E(C)}.
\end{align*}

\section{Example}\label{numericalexample}

Consider a polling system with two queues, and assume exponential service times and switch-over times. Suppose that $\lambda_1 = \frac{6}{10}, \lambda_2 = \frac{2}{10}, E(B_1) = E(B_2) = 1, E(S_1) = E(S_2) = 1$. The workload of this polling system is $\rho = \frac{8}{10}$. This example is extensively discussed in \cite{winands06} where MVA was used to compute mean waiting times and mean residual cycle times for the gated and exhaustive service disciplines.

In this example we show that the performance of this system can be improved by giving higher priority to jobs with smaller service times. We define a threshold $t$ and divide the jobs into two classes: jobs with a service time less than $t$ receive high priority, the other jobs receive low priority.
In Figures \ref{fig:gated} and \ref{fig:exhaustive} the mean waiting times of customers in $Q_1$ are shown as a function of the threshold $t$. The following four cases are distinguished:
\begin{itemize}
\item the mean waiting time of the low priority customers in $Q_1$ (indicated as ``Type L'');
\item the mean waiting time of the high priority customers in $Q_1$ (indicated as ``Type H'');
\item a weighted average of the above two mean waiting times: $\frac{\lambda_L}{\lambda_1}E(W_L) + \frac{\lambda_H}{\lambda_1}E(W_H)$ (indicated as ``Type 1 with priorities''). This can be interpreted as the mean waiting time of an arbitrary customer in $Q_1$;
\item the mean waiting time of an arbitrary customer in $Q_1$ if no priority rules would be applied to this queue (indicated as ``Type 1 no priorities''). In this situation there is no such thing as high and low priority customers, so the mean waiting time does not depend on $t$, and has already been computed in \cite{winands06}.
\end{itemize}
The figures show that a unique optimal threshold exists that minimises the mean weighted waiting time for customers in $Q_1$. This value depends on the service discipline used and is discussed in \cite{wierman07}. In this example the optimal threshold is 1 for gated, and 1.38 for exhaustive. Figure \ref{fig:gated} confirms that the mean waiting times for type $H$ and $L$ customers in the gated model only differ by a constant value: $E(W_L) - E(W_H) = \rho_1 E(C_{1,\textit{res}})$. For globally gated service no figure is included, because we again have $E(W_L) - E(W_H) = \rho_1 E(C_{1,\textit{res}})$.
The mean residual cycle time is different from the one in the gated model, but this does not affect the optimal threshold which is still $t=1$.

In the exhaustive model we have the following relation:
\[E(W_L) - E(W_H) = \frac{\rho_1(1-\rho_1)}{1-\rho_H}E(C^*_{1,\textit{res}}).\]
If we increase threshold $t$, the fraction of customers in $Q_1$ that receive high priority grows, and so does their mean service time. This means that $\rho_H$ increases as $t$ increases, so  $E(W_L) - E(W_H)$ gets bigger, which can be seen in Figure \ref{fig:exhaustive}. Notice that $\frac{E(W_H)}{E(W_L)} = 1-\rho_1$, so it does not depend on $t$.

\begin{figure}[h!]
\begin{center}
\includegraphics[width=\linewidth]{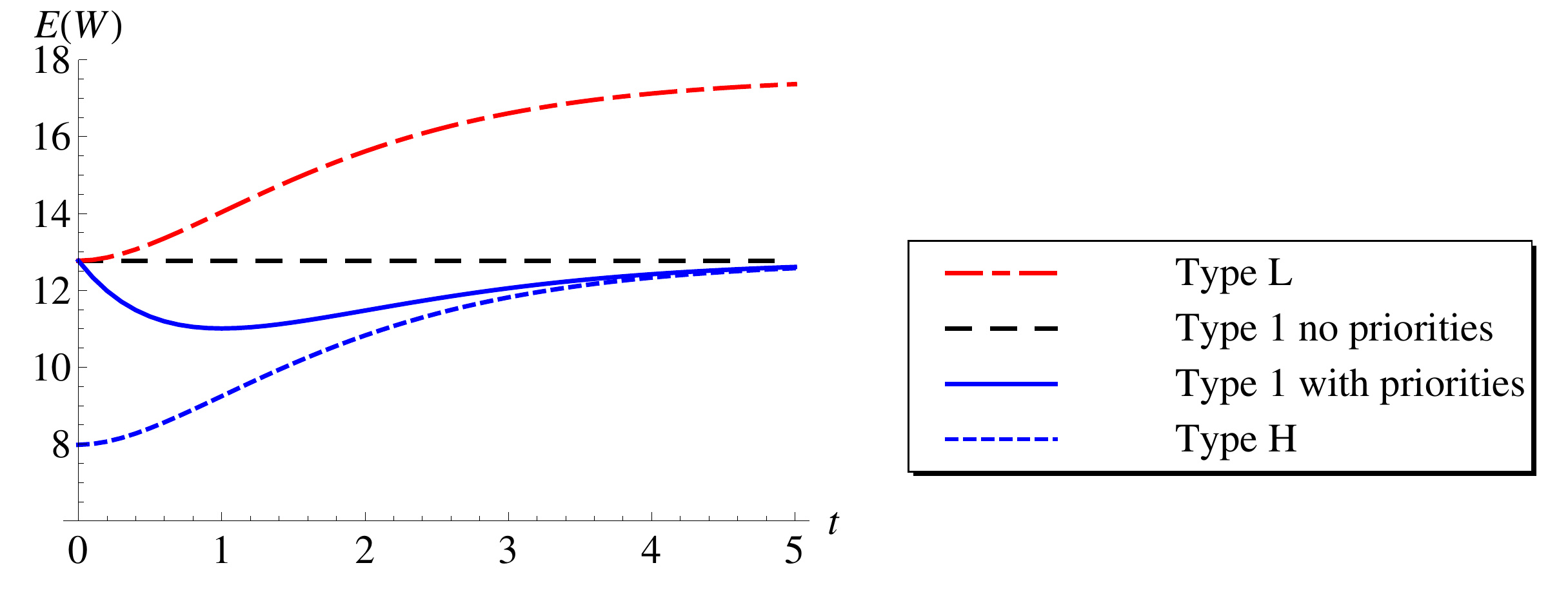}
\end{center}
\caption{Mean waiting time of customers in $Q_1$ in the gated polling system, versus threshold $t$.\label{fig:gated}}
\end{figure}

\begin{figure}[h!]
\begin{center}
\includegraphics[width=\linewidth]{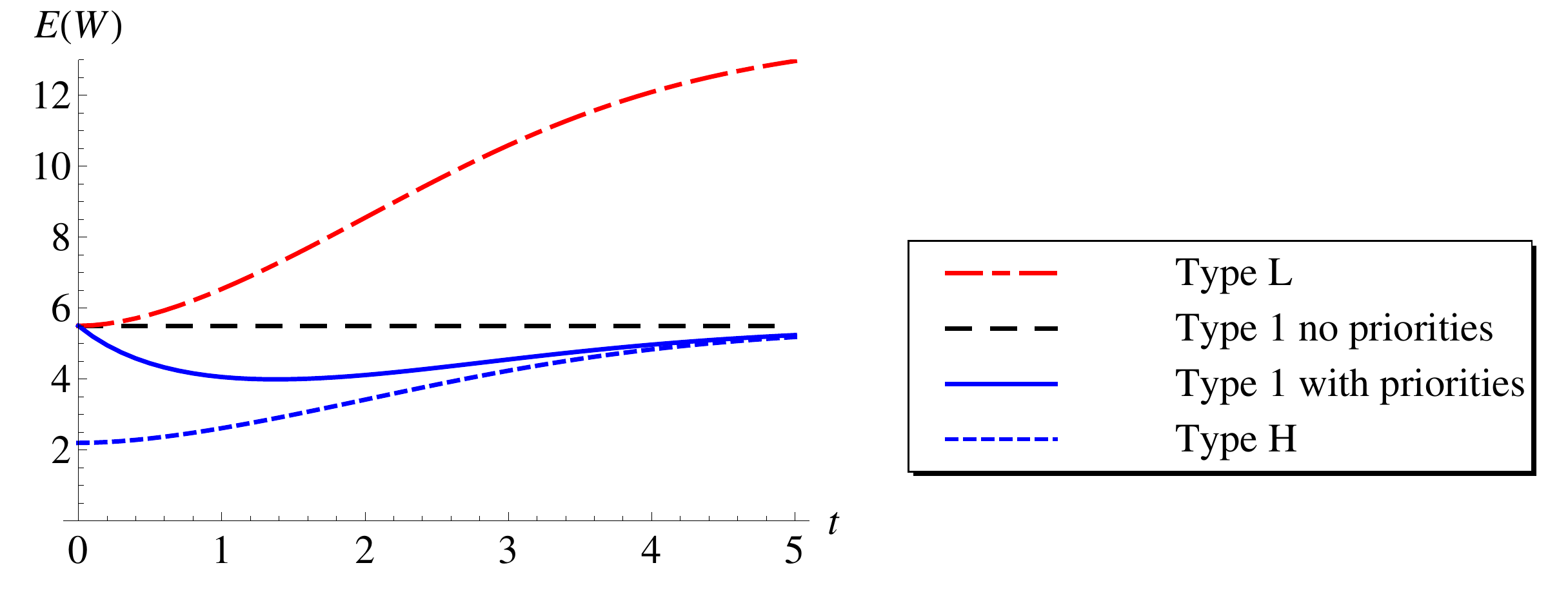}
\end{center}
\caption{Mean waiting time of customers in $Q_1$ in the exhaustive polling system, versus threshold $t$.\label{fig:exhaustive}}
\end{figure}

It is interesting to also consider the variance, or rather the standard deviation of the waiting time. Figures \ref{fig:gatedvar} and \ref{fig:exhaustivevar} show the standard deviation of the type $H$ and $L$ customers versus the threshold $t$. The figures also show the standard deviation of an arbitrary customer in $Q_1$, with and without priorities. The figures indicate that the waiting times in the gated system have smaller standard deviations than in the exhaustive case. In this example, the introduction of priorities affects the standard deviation of an arbitrary type 1 customer only slightly. However, it is interesting to zoom in to investigate the influence of threshold $t$. Figure \ref{fig:gatedexhaustivevarzoom} contains zoomed versions of Figures \ref{fig:gatedvar} and \ref{fig:exhaustivevar} and indicates that the threshold $t$ that minimises the overall mean waiting time of type $1$ customers in the priority system does not minimise the standard deviation. In fact, changing threshold $t$ affects the entire service time distributions $B_H$ and $B_L$, which results in two local minima for the standard deviation as function of threshold $t$.

\begin{figure}[t!]
\begin{center}
\includegraphics[width=\linewidth]{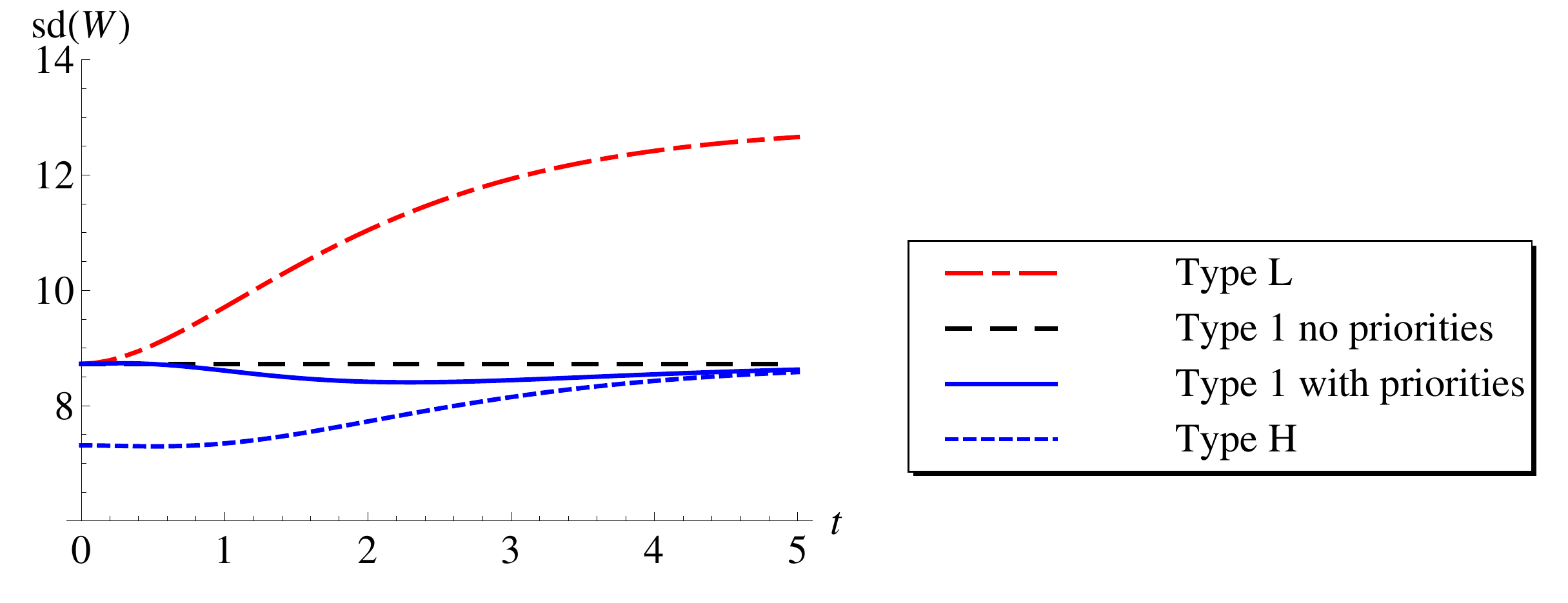}
\end{center}
\caption{Standard deviation of the waiting time of customers in $Q_1$ in the gated polling system, versus threshold $t$.\label{fig:gatedvar}}
\end{figure}

\begin{figure}[h!]
\begin{center}
\includegraphics[width=\linewidth]{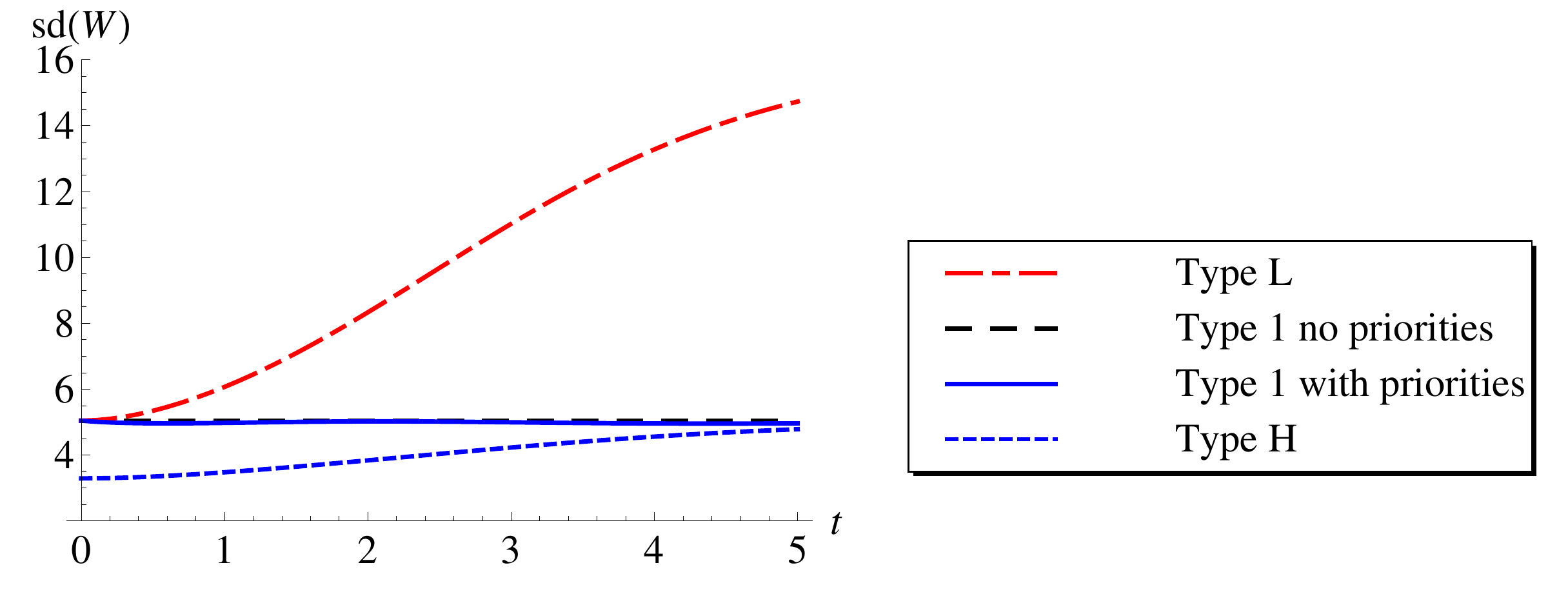}
\end{center}
\caption{Standard deviation of the waiting time of customers in $Q_1$ in the exhaustive polling system, versus threshold $t$.\label{fig:exhaustivevar}}
\end{figure}

\begin{figure}[t!]
\begin{center}
\includegraphics[width=0.45\linewidth]{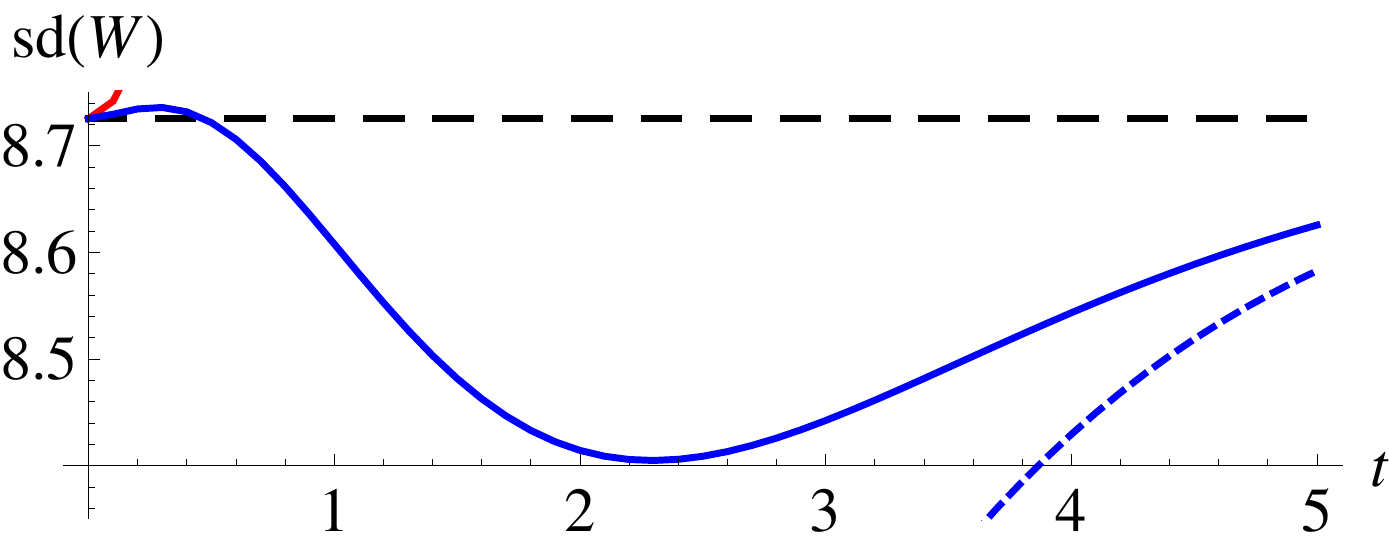}
\hfill
\includegraphics[width=0.45\linewidth]{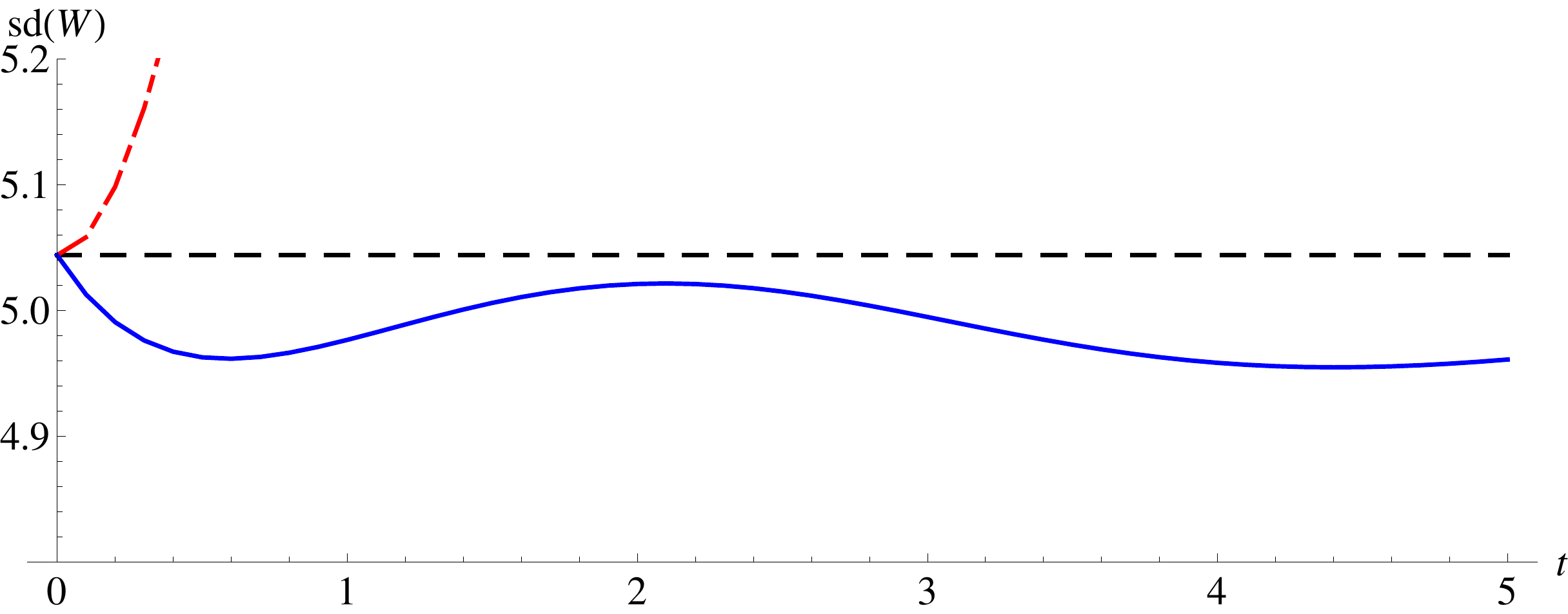}
\end{center}
\caption{Zoomed versions of Figures \ref{fig:gatedvar} (left) and \ref{fig:exhaustivevar} (right).\label{fig:gatedexhaustivevarzoom}}
\end{figure}

\section{Possible extensions and future research}\label{extensions}

The polling system studied in the present paper leaves many possibilities for extensions or variations. In this section we discuss some of them.

\paragraph{Multiple queues and priority levels.} Probably the most obvious extension of the model under consideration, is a polling system with any number of queues and any number of priority levels in each queue. In recent research \citep{boonadanboxma2008}, we have discovered that such a polling model can be analysed in detail. Each queue can have its own service discipline, either exhaustive or (synchronised) gated.

\paragraph{Preemptive resume.} In the present paper, the service of low priority customers is not interrupted by the arrival of a high priority customer. If we allow for service interruptions, these would only take place in a queue with exhaustive service, since (globally) gated service forces high priority customers to wait behind the gate. We note that allowing service interruptions does not affect the joint queue length distributions at polling instants, nor the cycle time.
Also the waiting time of low priority customers is unaffected (but they might have a longer \emph{sojourn time}). It only affects the waiting time of high priority customers, because they do not have to wait for a residual service time of a low priority customer. The LST of the waiting time distribution of a high priority customer if service is preemptive resume, is:
\begin{equation*}
E[\ee^{-\omega W_H}] = \frac{(1-\rho_H)\omega}{\omega-\lambda_H(1-\beta_H(\omega))}\cdot
\left[\frac{1-\rho_1}{1-\rho_H} \cdot \frac{1-\widetilde{I}_1(\omega)}{\omega E(I_1)}+ \frac{\rho_L}{1-\rho_H}\right].
\end{equation*}

\paragraph{Mixed gated/exhaustive service.} In the present paper, customers in $Q_1$ receive either exhaustive or (globally) gated service. One may consider serving each priority level according to a different service discipline. In \cite{boonadan2008}, high priority customers receive exhaustive service, whereas low priority customers receive gated service. This gives high priority customers an additional advantage, but it turns out that for low priority customers this strategy may be better than, e.g., gated service for all priority levels. A mixture of globally gated service for low priority customers and exhaustive service for high priority customers can be analysed similarly.

The ``opposite'' strategy, where low priority customers are served exhaustively and high priority customers are served according to the gated service discipline is easier to analyse, since we can model it as a nonpriority polling model with $Q_1$ replaced by two queues, $Q_{H}$ and $Q_L$, containing the type $H$ and type $L$ customers and having gated and exhaustive service respectively.

\paragraph{Partially gated.} A variant of the gated service discipline is partially gated service: every customer, type $H$ or $L$, standing in front of the gate is served during a visit with a fixed probability $p$, and is not served with probability $1-p$. The probability $p$ might even depend on the customer type. Whether a rejected customer is eligible for service in the next cycle, or leaves the system, does not matter. Both situations can be analysed.

\paragraph{Different polling sequences.} We assume that the server alternates between $Q_1$ and $Q_2$. A different way of introducing priorities to a polling system is by increasing the frequency of visits to a queue within a cycle. One can, e.g., decide to visit $Q_1$ two consecutive times if gated service is used. Or one can think of a system where the server switches to $Q_j$ after completing a visit to $Q_i$ with probability $p_{ij}$.

\paragraph{Large setup times.} \cite{winandsPhD} establishes fluid limits for polling systems with any branching type service discipline and \emph{deterministic} switch-over times tending to infinity. The scaled waiting time distribution is shown to converge to a uniform distribution with bounds that can be computed explicitly. The results are relevant to applications in production systems, where large setup times are common. These fluid limits can also be computed for the polling model that is discussed in the present paper and give explicit insight in when each of the discussed service disciplines is optimal.

\bibliographystyle{abbrvnat}

\begin{thebibliography}{27}
\providecommand{\natexlab}[1]{#1}
\providecommand{\url}[1]{\texttt{#1}}
\expandafter\ifx\csname urlstyle\endcsname\relax
  \providecommand{\doi}[1]{doi: #1}\else
  \providecommand{\doi}{doi: \begingroup \urlstyle{rm}\Url}\fi

\bibitem[Boon and Adan(2008)]{boonadan2008}
M.~A.~A. Boon and I.~J. B.~F. Adan.
\newblock Mixed gated/exhaustive service in a polling model with priorities.
\newblock \textsc{Eurandom} report 2008-045, submitted for publication, 2008.

\bibitem[Boon et~al.(2008{\natexlab{a}})Boon, Adan, and
  Boxma]{boonadanboxma2008}
M.~A.~A. Boon, I.~J. B.~F. Adan, and O.~J. Boxma.
\newblock A polling model with multiple priority levels.
\newblock \textsc{Eurandom} report 2008-029, submitted for publication,
  2008{\natexlab{a}}.

\bibitem[Boon et~al.(2008{\natexlab{b}})Boon, Adan, and
  Boxma]{boonadanboxma2queues2008}
M.~A.~A. Boon, I.~J. B.~F. Adan, and O.~J. Boxma.
\newblock A two-queue polling model with two priority levels in the first
  queue.
\newblock \emph{ValueTools 2008 (Third International Conference on Performance
  Evaluation Methodologies and Tools, Athens, Greece, October 20-24, 2008)},
  2008{\natexlab{b}}.

\bibitem[Borst(1996)]{semphd}
S.~C. Borst.
\newblock \emph{Polling Systems}, volume 115 of \emph{CWI Tracts}.
\newblock 1996.

\bibitem[Borst and Boxma(1997)]{borst97}
S.~C. Borst and O.~J. Boxma.
\newblock Polling models with and without switchover times.
\newblock \emph{Operations Research}, 45\penalty0 (4):\penalty0 536 -- 543,
  1997.

\bibitem[Boxma(1989)]{boxmaworkloadsandwaitingtimes89}
O.~J. Boxma.
\newblock Workloads and waiting times in single-server systems with multiple
  customer classes.
\newblock \emph{Queueing Systems}, 5:\penalty0 185--214, 1989.

\bibitem[Boxma et~al.(1992)Boxma, Levy, and Yechiali]{boxmalevyyechiali92}
O.~J. Boxma, H.~Levy, and U.~Yechiali.
\newblock Cyclic reservation schemes for efficient operation of multiple-queue
  single-server systems.
\newblock \emph{Annals of Operations Research}, 35\penalty0 (3):\penalty0
  187--208, 1992.

\bibitem[Boxma et~al.(2008)Boxma, Bruin, and Fralix]{boxmafralixbruin08}
O.~J. Boxma, J.~Bruin, and B.~H. Fralix.
\newblock Waiting times in polling systems with various service disciplines.
\newblock \textsc{Eurandom} report 2008-019, submitted for publication, 2008.

\bibitem[Cicin-Sain et~al.(2001)Cicin-Sain, Pearce, and Sunde]{cicin2001}
M.~Cicin-Sain, C.~E.~M. Pearce, and J.~Sunde.
\newblock On the application of a polling model with non-zero walk times and
  priority processing to a medical emergency-room environment.
\newblock \emph{Proceedings of the 23rd International Conference on Information
  Technology Interfaces, 2001}, 1:\penalty0 49--56, 2001.

\bibitem[Cohen(1982)]{cohen82}
J.~W. Cohen.
\newblock \emph{The Single Server Queue}.
\newblock North-Holland, Amsterdam, revised edition, 1982.

\bibitem[Cooper and Murray(1969)]{coopermurray69}
R.~B. Cooper and G.~Murray.
\newblock Queues served in cyclic order.
\newblock \emph{The Bell System Technical Journal}, 48\penalty0 (3):\penalty0
  675--689, 1969.

\bibitem[Eisenberg(1972)]{eisenberg72}
M.~Eisenberg.
\newblock Queues with periodic service and changeover time.
\newblock \emph{Operations Research}, 20\penalty0 (2):\penalty0 440--451, 1972.

\bibitem[Fuhrmann and Cooper(1985)]{fuhrmanncooper85}
S.~W. Fuhrmann and R.~B. Cooper.
\newblock Stochastic decompositions in the {$M/G/1$} queue with generalized
  vacations.
\newblock \emph{Operations Research}, 33\penalty0 (5):\penalty0 1117--1129,
  1985.

\bibitem[Keilson and Servi(1990)]{keilsonservi90}
J.~Keilson and L.~D. Servi.
\newblock The distributional form of {Little's Law} and the {Fuhrmann-Cooper}
  decomposition.
\newblock \emph{Operations Research Letters}, 9\penalty0 (4):\penalty0
  239--247, 1990.

\bibitem[Kella and Yechiali(1988)]{kellayechiali88}
O.~Kella and U.~Yechiali.
\newblock Priorities in {$M/G/1$} queue with server vacations.
\newblock \emph{Naval Research Logistics}, 35:\penalty0 23--34, 1988.

\bibitem[Khamisy et~al.(1992)Khamisy, Altman, and Sidi]{khamisy92}
A.~Khamisy, E.~Altman, and M.~Sidi.
\newblock Polling systems with synchronization constraints.
\newblock \emph{Annals of Operations Research}, 35:\penalty0 231 -- 267, 1992.

\bibitem[Lam et~al.(2006)Lam, Leung, and Chan]{ieee802.11-1}
R.~Y.~W. Lam, V.~C.~M. Leung, and H.~C.~B. Chan.
\newblock Polling-based protocols for packet voice transport over {IEEE 802.11}
  wireless local area networks.
\newblock \emph{{IEEE} transactions on wireless communications}, 13:\penalty0
  22--29, 2006.

\bibitem[Miorandi and Zanella(2004)]{bluetooth1}
D.~Miorandi and A.~Zanella.
\newblock Performance evaluation of bluetooth polling schemes: An analytical
  approach.
\newblock \emph{Mobile Networks and Applications}, 9:\penalty0 63--72, 2004.

\bibitem[Ni(2005)]{ieee802.11-2}
Q.~Ni.
\newblock Performance analysis and enhancements for {IEEE 802.11e} wireless
  networks.
\newblock \emph{{IEEE} Network}, 19:\penalty0 21--27, 2005.

\bibitem[Resing(1993)]{resing93}
J.~A.~C. Resing.
\newblock Polling systems and multitype branching processes.
\newblock \emph{Queueing Systems}, 13:\penalty0 409 -- 426, 1993.

\bibitem[Shanthikumar(1989)]{Shanthikumar89}
J.~G. Shanthikumar.
\newblock Level crossing analysis of priority queues and a conservation
  identity for vacation models.
\newblock \emph{Naval Research Logistics}, 36:\penalty0 797--806, 1989.

\bibitem[Tak\'acs(1968)]{takacs68}
L.~Tak\'acs.
\newblock Two queues attended by a single server.
\newblock \emph{Operations Research}, 16\penalty0 (3):\penalty0 639--650, 1968.

\bibitem[Takagi(1991)]{takagi91}
H.~Takagi.
\newblock \emph{Queueing Analysis: A Foundation Of Performance Evaluation},
  volume 1: Vacation and priority systems, part 1.
\newblock North-Holland, Amsterdam, 1991.

\bibitem[Wierman et~al.(2007)Wierman, Winands, and Boxma]{wierman07}
A.~Wierman, E.~M.~M. Winands, and O.~J. Boxma.
\newblock Scheduling in polling systems.
\newblock \emph{Performance Evaluation}, 64:\penalty0 1009--1028, 2007.

\bibitem[Winands(2007)]{winandsPhD}
E.~M.~M. Winands.
\newblock \emph{Polling, Production \& Priorities}.
\newblock PhD thesis, Eindhoven University of Technology, 2007.

\bibitem[Winands et~al.(2006)Winands, Adan, and van Houtum]{winands06}
E.~M.~M. Winands, I.~J. B.~F. Adan, and G.-J. van Houtum.
\newblock Mean value analysis for polling systems.
\newblock \emph{Queueing Systems}, 54:\penalty0 35--44, 2006.

\bibitem[Yaiz and Heijenk(2002)]{bluetooth2}
R.~A. Yaiz and G.~Heijenk.
\newblock Polling best effort traffic in bluetooth.
\newblock \emph{Wireless Personal Communications}, 23:\penalty0 195--206, 2002.

\end{thebibliography}

\end{document}